\documentclass[11pt]{amsart}
\usepackage{latexsym, mathrsfs, color, tikz,tikz-cd,multirow,bbm,mathtools, amsmath, amssymb, comment,xcolor}
\usetikzlibrary{decorations.markings}

\usepackage{graphics}
\usepackage{subcaption}
\usepackage{xparse}
\input{xy}
\xyoption{all}
\usepackage{adjustbox}
\usepackage[colorlinks=true, pdfstartview=FitV,
 linkcolor=blue,anchorcolor=Periwinkle, citecolor=red,urlcolor=Emerald]{hyperref}

\usetikzlibrary{matrix, positioning}
\usetikzlibrary{arrows, arrows.meta}
\tikzset{->-/.style={decoration={  markings,  mark=at position #1 with
			{\arrow{>}}},postaction={decorate}}}
\tikzset{-<-/.style={decoration={  markings,  mark=at position #1 with
			{\arrow{<}}},postaction={decorate}}}
\tikzcdset{scale cd/.style={every label/.append style={scale=#1},
    cells={nodes={scale=#1}}}}

\setlength{\textwidth}{460pt} \setlength{\hoffset}{-45pt}

\newenvironment{red}{\relax\color{red}}{\relax}
\newenvironment{blue}{\relax\color{blue}}{\hspace*{.5ex}\relax}

\newcommand{\ber}{\begin{red}}
\newcommand{\er}{\end{red}}
\newcommand{\beb}{\begin{blue}}
\newcommand{\eb}{\end{blue}}

\theoremstyle{plain}
\newtheorem{theorem}{Theorem}[section]
\newtheorem{thmx}{Theorem}

\newtheorem{lemma}[theorem]{Lemma}
\newtheorem{corollary}[theorem]{Corollary}

\theoremstyle{definition}
\newtheorem{definition}[theorem]{Definition}

\newtheorem{example}[theorem]{Example}
\newtheorem{remark}[theorem]{Remark}
\newtheorem{notations}[theorem]{Notations}
\numberwithin{equation}{section}

%==========notation===========

\def\surf{\mathbf{S}}                       %FST's surface
\def\TT{\mathbf{T}}
\def\PP{\mathbf{P}}
\def\MM{\mathbf{M}}

\def\<{\langle}
\def\>{\rangle}

\renewcommand{\k}{\mathbf{k}}

 \renewcommand{\mod}{\operatorname{mod}}

\newcommand{\Int}{\operatorname{Int}}
\newcommand{\Hom}{\operatorname{Hom}}
\newcommand{\Ext}{\operatorname{Ext}}
\newcommand{\udim}{\operatorname{\underline{dim}}}

\begin{document}
%\iffalse 
\title[A geometric model for the non-$\tau$-rigid modules of type $\widetilde{D}_n$]{A geometric model for the non-$\tau$-rigid modules of type $\widetilde{D}_n$}
\author[Blake Jackson]{Blake Jackson}
\address{Department of Mathematics, University of Connecticut,
	Storrs, CT 06269, U.S.A.}
\email{blake.jackson@uconn.edu}

\subjclass[2020]{Primary 16G70; Secondary 13F60, 16G20, 05E10}

\begin{abstract}
    We give a geometric model for the non-$\tau$-rigid modules over acyclic path algebras of type $\widetilde{D}_n$. Similar models have been provided for module categories over path algebras of types $A_n, D_n,$ and $\widetilde{A}_n$ as well as the $\tau$-rigid modules of type $\widetilde{D}_n$. A major draw of these geometric models is the ``intersection-dimension formulas'' they often come with. These formulas give an equality between the intersection number of the curves representing the modules in the geometric model and the dimension of the extension spaces between the two modules. This formula allows us to calculate the homological data between two modules combinatorially. Since there are infinitely many distinct homogeneous stable tubes in the regular component of the Auslander-Reiten quiver of type $\widetilde{D}_n$, all of which are disjoint, our geometric data requires an extra decoration on the admissible edges in our geometric model to prevent intersections between curves corresponding to modules in distinct stable tubes of the Auslander-Reiten quiver.
\end{abstract}

\maketitle

%%%%%%%%%%%%%%%%%%%%%%%%%%%%%%%%%%%%%%%%%%%%%%%%%%%%%%%%%%%%%%%%%
\section{Introduction}
%%%%%%%%%%%%%%%%%%%%%%%%%%%%%%%%%%%%%%%%%%%%%%%%%%%%%%%%%%%%%%%%%

Cluster categories were introduced in \cite{buan_cluster-tilted_2007,buan_cluster_2008,buan_tilting_2006} to study the relationship between quiver representations and the cluster algebras introduced by Fomin and Zelevinsky \cite{fomin_cluster_2002,fomin_cluster_2003}.
A cluster category is a modified version of the module category of a hereditary algebra (in fact, the cluster category is the orbit category of the bounded derived category of finitely-generated modules over a path algebra under the action of a specific automorphism).
In \cite{caldero_quivers_2006}, the authors realize the cluster categories of type $A_n$ by constructing a category of diagonals of a regular polygon with $n+3$ vertices.
Based on the polygon model for cluster algebras of type $A_n$, Fomin, Shapiro, and Thurston classify all cluster algebras arising from triangulations of orientable bordered marked surfaces \cite{fomin_cluster_2008}.
Later, Schiffler \cite{schiffler_geometric_2008} provided a model for cluster (module) categories of type $D_n$ by using triangulations of a once-punctured $(n+2)$-gon.
More recently, Baur and Torkildsen \cite{baur_geometric_2015} give a geometric model for the modules over path algebras of type $\widetilde{A}_n$ and He, Zhou, and Zhu \cite{he_mutation_2022,he_geometric_2023} provided a geometric model for modules over skew-gentle algebras. 
Since cluster algebras of type $E$ cannot be realized via triangulations of a surface, the combinatorial models for type $E$ are not as clean as the other Dynkin types; however, a model has been provided by Lamberti \cite{lamberti_combinatorial_2015}.

Another active area of research is the geometric modeling of $m$-cluster categories
\cite{jacquet-malo_geometric_2025,torkildsen_geometric_2015,zhou_cluster_2009}.
These higher analogs of the cluster category recover the classical cluster categories when $m=1$.
Again, $m$-cluster categories have been studied for types $A_n, D_n, \widetilde{A}_n$, and $\widetilde{D}_n$.

The shortcoming of these previous works is that most of them (except \cite{baur_geometric_2015}) neglect a large family of modules over path algebras of Euclidean type (those with a $\widetilde{tilde}$): the regular modules.
Tilting theory studies \textit{rigid} and \textit{$\tau$-rigid} indecomposable modules; however, not all indecomposable modules are rigid (or even $\tau$-rigid).
Our contribution is a geometric model for the \textit{non-$\tau$-rigid} modules over acyclic path algebras of type $\widetilde{D}_n$, which completes the geometric modeling for modules over path algebras of Euclidean quivers from surfaces.

The main feature of these geometric models regarding the module category is the so-called intersection-dimension formula for the modules over these algebras.
Modules in the geometric model are represented by some set of admissible decorated curves in their respective surface. 
The intersection-dimension formula relates the geometric data (intersection number of curves) with the homological data of the modules they represent (dimension of the first extension spaces between the two modules).
The former is a straightforward calculation after the model is given; the latter can be highly nontrivial, especially for path algebras over quivers of affine Dynkin type (for example, quivers of type $\widetilde{D}_n$).
This paper continues this tradition with an intersection-dimension formula for the non-$\tau$-rigid modules of type $\widetilde{D}_n$.

The main result of the paper is the following theorem:
\begin{thmx} \label{main-thm}
    Let $\surf$ be a triangulated, twice-punctured disk with $(n-2)$ marked points on the boundary whose triangulation $\TT$ corresponds to an acyclic quiver $Q^\TT$ of type $\widetilde{D}_n$, $\k$ be an algebraically closed field, and $\k Q^\TT$ be the path algebra over $Q^\TT$. 
    Then given any two colored admissible tagged edges $(\gamma_1, \kappa_1, \lambda_1)$ and $(\gamma_2, \kappa_2, \lambda_2)$ (not necessarily distinct), 
    $$\Int((\gamma_1, \kappa_1, \lambda_1), (\gamma_2, \kappa_2, \lambda_2)) = \dim_\k \Ext^1 (M_1, M_2) + \dim_\k \Ext^1 (M_2, M_1)$$ where $M_i = M(\gamma_i,\kappa_i, \lambda_i)$ is a $\k Q^\TT$-module for $i =1,2$, $\Int$ is the intersection number between two admissible tagged edges, and $\Ext^1(M,N)$ is the group of extensions of $N$ by $M$ viewed as a $\k$-vector space spanned by short exact sequences in $\mod \k Q^\TT$.
\end{thmx}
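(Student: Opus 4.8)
The plan is to establish the identity by first reducing the extension groups to Hom-spaces and then running a case analysis governed by the positions of $M_1$ and $M_2$ in the regular part of the Auslander--Reiten quiver of $\k Q^\TT$. Every colored admissible tagged edge represents a non-$\tau$-rigid module, and over a tame hereditary algebra of type $\widetilde{D}_n$ an indecomposable module is non-$\tau$-rigid exactly when it admits a self-extension; all such modules are regular, so the entire analysis is confined to the regular component, and $M_1$ and $M_2$ each lie in a tube -- either one of the three exceptional tubes (of ranks $2$, $2$, and $n-2$, where $n-2\ge 2$) or one of the infinitely many homogeneous tubes. Since a regular module is never projective, the Auslander--Reiten formula for the hereditary algebra $\k Q^\TT$ gives $\dim_\k\Ext^1(M_1,M_2)=\dim_\k\Hom(M_2,\tau M_1)$ and $\dim_\k\Ext^1(M_2,M_1)=\dim_\k\Hom(M_1,\tau M_2)$, and $\tau$ preserves every tube; so the right-hand side of the theorem equals $\dim_\k\Hom(M_2,\tau M_1)+\dim_\k\Hom(M_1,\tau M_2)$. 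This quantity I would write down as an explicit combinatorial function of the quasi-socles and quasi-lengths of $M_1$ and $M_2$ via the standard ``ray and coray'' (Hom-hammock) description of a tube; in particular it vanishes identically whenever $M_1$ and $M_2$ lie in different tubes.

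Next I would build the dictionary between the curve data $(\gamma,\kappa,\lambda)$ and the Auslander--Reiten data of $M(\gamma,\kappa,\lambda)$ so that the two sides of the formula become directly comparable. From the construction of $M(\gamma,\kappa,\lambda)$ one reads off which tube the module lies in, its quasi-length, and its quasi-socle: curves with nonzero winding around a puncture represent modules in homogeneous tubes, with the color $\kappa$ recording which tube; curves running between the two punctures or from the boundary to a puncture represent modules in the exceptional tubes, with the tagging $\lambda$ distinguishing the two rank-$2$ tubes and encoding the position on their mouths; and a full loop of $\gamma$ about a puncture produces a self-intersection corresponding to a generator of a self-extension. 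Under this dictionary, and with $\Int$ computed by placing $\gamma_1$ and $\gamma_2$ in minimal position and tallying interior crossings together with the crossings created or removed at the punctures by the tags -- subject to the color-layering convention under which curves of distinct color are never in crossing position -- the theorem reduces, in each case, to the equality of a topological crossing count with the Hom-hammock expression of the first step.

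I would then check this equality tube by tube: (i) $M_1,M_2$ in the same exceptional tube; (ii) $M_1,M_2$ in the same homogeneous tube, including the diagonal case $M_1=M_2$, where the self-intersection number of $\gamma$ must evaluate to $2\dim_\k\Ext^1(M,M)$; (iii) one of $M_1,M_2$ in an exceptional tube and the other in a homogeneous tube; and (iv) the remaining ``orthogonal'' cases -- different exceptional tubes, or different homogeneous tubes -- in which the decoration rules must force $\Int=0$ to match the simultaneous vanishing of both extension groups. The homogeneous-versus-homogeneous and homogeneous-versus-exceptional cases are precisely where the color decoration earns its keep, and they must be organized so that the color-layering convention is exactly what makes the topological count agree with the homological one.

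The step I expect to be the main obstacle is the bookkeeping at the punctures and the calibration of the decoration against the homology. The intersection number is sensitive to the exact conventions for a crossing occurring at a puncture between two edges tagged there, for crossings removable by isotopy only after the color layering has been accounted for, and for the half-twists introduced by a change of tag; an off-by-one in any of these would already fail on the boundary cases -- $M_1=M_2$ and the mouth modules of the exceptional tubes, where the dividing line between $\tau$-rigid and non-$\tau$-rigid modules sits. On the homological side I expect the rank-$(n-2)$ exceptional tube to demand the most work because its Hom-hammocks are the largest; on the geometric side the homogeneous tubes are the most delicate, since there the whole formula is carried by winding numbers, and one must verify both that two curves of distinct color contribute no crossing -- consistent with the Ext-orthogonality of distinct homogeneous tubes -- and that two same-colored curves in one homogeneous tube contribute exactly the crossing number predicted by the difference of their winding numbers.
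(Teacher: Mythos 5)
Your proposal has a fundamental misreading of the scope of the theorem, and this misreading eliminates a major portion of the required argument. You assert that ``every colored admissible tagged edge represents a non-$\tau$-rigid module'' and therefore confine the entire analysis to the regular component. This is false. The set of colored admissible tagged edges $E^\times_\lambda$ includes the projective, preprojective, injective, and preinjective edges (all of which are assigned the color $-\infty$), and the equivalence $\varphi$ sends these to the indecomposable modules in $\mathcal{P}(\k Q^\TT)$ and $\mathcal{Q}(\k Q^\TT)$, which are $\tau$-rigid. The theorem claims the intersection-dimension formula for \emph{any} two edges in $E^\times_\lambda$, not only those sitting in the regular component. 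Indeed, the paper's closing example verifies the formula for the pairs $\bigl(P(3), M_0\bigr)$ and $\bigl(I(3), M_0\bigr)$, both of which involve a non-regular module. (Separately, even within the regular component, your claim that the non-$\tau$-rigid indecomposables are \emph{exactly} the regular ones is too strong: simple regular modules at the mouth of an exceptional tube of rank $>1$ are $\tau$-rigid.)

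The consequence is that the preprojective case --- which is the technically nontrivial half of the paper's proof --- is entirely absent from your plan. The paper first proves a lemma for $M_1$ preprojective and $M_2$ arbitrary: it uses the vanishing of $\Hom$ from $\mathcal{Q}$, $\mathcal{R}$ into $\mathcal{P}$, the identity $\Hom(P(j),M)\cong M_j$, the $\tau$- and $\rho$-invariance of $\Int$, $\Hom$, and $\Ext^1$ (for non-projectives) to slide $\gamma_1$ back into the triangulation, and the corollary $\udim M(\gamma,\kappa,\lambda) = (\Int((\gamma,\kappa,\lambda), i))_{i\in\TT}$ to convert the intersection count into a dimension-vector entry, which is then a $\Hom$ space and finally an $\Ext^1$ via Auslander--Reiten. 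Duality then gives the preinjective case. Your tube-by-tube analysis in the regular component is consistent in spirit with the paper's second lemma (distinct colors $\Rightarrow$ distinct tubes $\Rightarrow$ both sides vanish; same color $\Rightarrow$ uniserial combinatorics), but on its own it proves only one of the two lemmas the theorem needs. You should either drop the false claim and add the preprojective/preinjective reduction, or explicitly restrict the statement you are proving --- but as written the proposal does not prove Theorem A.
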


The paper is organized as follows.
Section 2 gives background information on quivers and path algebras. 
Section 3 describes the module category over a path algebra of affine Dynkin type.
Sections 4 and 5 are dedicated to the category of colored admissible tagged edges in a twice-punctured disk.
Section 6 gives an equivalence of categories, and Section 7 completes the proof of Theorem~\ref{main-thm} by giving the intersection-dimension formula.

Throughout this paper, $\k$ is an algebraically closed field.
For any $\k$-algebra $\mathcal{A}$, we consider only finite-dimensional left $\mathcal{A}$-modules where $\mod \mathcal{A}$ is the abelian category of these modules.
Finally, we use the convention that the affine Dynkin diagram of type $\widetilde{D}_n$ contains $n+1$ vertices.

%%%%%%%%%%%%%%%%%%%%%%%%%%%%%%%%%%%%%%%%%%%%%%%%%%%%%%%%%%%%%%%%%
\section{Quivers and Path Algebras}
%%%%%%%%%%%%%%%%%%%%%%%%%%%%%%%%%%%%%%%%%%%%%%%%%%%%%%%%%%%%%%%%%

\begin{definition}\label{def-quiver}
    A \textbf{quiver} $Q = (Q_0, Q_1)$ is a finite digraph without loops and directed 2-cycles where $Q_0$ is the set of vertices and $Q_1$ is the set of arrows.
    The elements of $Q_0$ are indexed by the numbers $1, 2, ..., n$. 
    If $\alpha \in Q_1$ with $\alpha: i \to j$, then we say $s(\alpha) = i$ is the \textbf{source} of $\alpha$ and $t(\alpha) = j$ is the \textbf{target} of $\alpha$.
    A quiver $Q$ is \textbf{acyclic} if it contains no directed cycles of any length.
\end{definition}

\begin{definition}
    Let $i \in Q_0$ be a vertex in $Q$. 
    Then $\mu_i Q$ is the \textbf{mutation} of $Q$ at vertex $i$ and is the quiver obtained from $Q$ in the following way:
    \begin{enumerate}
        \item for each path $j \to i \to k$ in $Q$ of length 2 passing through $i$, add an arrow $j \to k$ in $\mu_i Q$
        \item reverse all arrows which begin or end at vertex $i$
        \item delete any 2-cycles that have appeared as a result of step 1.
    \end{enumerate}
    A quiver $Q'$ is \textbf{mutation-equivalent} to $Q$ if there is a finite sequence of vertices $i_1, i_2, ..., i_k$ in $Q_0$ such that $Q' = \mu_{i_1}\mu_{i_2}...\mu_{i_k}Q$.
    If $G$ is a finite or affine Dynkin diagram, then a quiver $Q$ is \textbf{of type $G$} if $Q$ is mutation equivalent to an acyclic orientation of $G$. 
\end{definition}

Quivers were introduced by Gabriel \cite{gabriel_unzerlegbare_1972} and have become fundamental objects in the study of cluster algebras developed by Fomin and Zelevinsky \cite{fomin_cluster_2002,fomin_cluster_2003}.
While this paper is less concerned with cluster theory, developments in the field of cluster algebras have led to significant contributions in other areas of mathematics, such as the representation theory of algebras.
This paper is concerned primarily with quivers of type $\widetilde{D}_n$; however, care is taken to state results in the most general sense.
It is also worth noting that mutation acts as an involution on quivers (and clusters \cite{fomin_cluster_2002}).
This means that for a quiver $Q$, $\mu_i \mu_i Q = Q$.

\begin{definition}
    Let $Q$ be a finite, connected quiver and $\k$ be an algebraically closed field.
    Then $\k Q$ is the \textbf{path algebra over $Q$} with basis given by the set of all directed paths in $Q$ and multiplication between two basis elements given by path concatenation.
    If $Q$ is acyclic then the basis of $\k Q$ will have finitely many elements; in this case, the algebra is \textbf{finite-dimensional}.
    If $G$ is a finite or affine Dynkin diagram, then $\k Q$ is \textbf{of type $G$} if $Q$ is of type $G$. 
\end{definition}

\begin{example}
Perhaps the first example we should consider is the path algebra of type $A_2$.
In this case, the quiver is given by
\[\begin{tikzpicture}
        \node[scale=1] at (-1, 0) {$Q = A_2 = \quad$};
        \filldraw[black] (0,0) circle (2pt);
        \node[scale=1] at (0, 0.4) {1};
        \filldraw[black] (2,0) circle (2pt);
        \node[scale=1] at (2, 0.4) {2};
        
        \draw[thick,->, shorten >=5, shorten <=5, >=stealth'](0,0)to(2,0);
        \node[scale=1] at (1, 0.3) {$\alpha$};
\end{tikzpicture}\]
Then $\k Q = \k A_2$ has vector space basis $\{ e_1, \alpha, e_2 \}$, where $e_i$ is the lazy or constant path at vertex $i$.
If we write our paths right-to-left, the multiplication between basis elements is \[\alpha e_1 = \alpha, \quad e_2\alpha = \alpha, \quad e_1^2 = e_1, \quad e_2^2 = e_2,\] and all other products are zero.
Therefore, our path algebra is isomorphic to the following upper triangular matrix algebra:
\[\k Q = \k A_2 \cong \left\{ \begin{bmatrix}
    \lambda_{e_1} & \lambda_{\alpha} \\
    0 & \lambda_{e_2}
\end{bmatrix} \ : \ \lambda_c \in \k \text{ for } c \text{ a path} \right\} \]
\end{example}

We care about path algebras over quivers because of the following classical result from the representation theory of finite-dimensional algebras.

\begin{theorem}
    Every finite-dimensional algebra over an algebraically closed field is Morita equivalent to a quotient of a path algebra over some quiver $Q$.
\end{theorem}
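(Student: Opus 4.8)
The plan is to reproduce the classical argument of Gabriel. Let $A$ be a finite-dimensional $\k$-algebra. The first step is to reduce to the case of a \emph{basic} algebra. Fix a complete set of primitive orthogonal idempotents $e_1,\dots,e_m$ with $1 = e_1 + \cdots + e_m$, and keep one representative $e_{i_1},\dots,e_{i_n}$ from each isomorphism class among the indecomposable projectives $Ae_i$; set $e = e_{i_1} + \cdots + e_{i_n}$. Then $eAe$ is basic, and since $Ae$ is a projective generator of $\mod A$, the functor $\Hom_A(Ae,-)$ (equivalently $M \mapsto eM$) is a Morita equivalence $\mod A \simeq \mod eAe$. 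So it suffices to show that a \emph{basic} finite-dimensional $\k$-algebra is a quotient of a path algebra; after this reduction I rename $eAe$ as $A$ and its primitive idempotents as $e_1,\dots,e_n$.

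Next I would build the quiver. Since $\k$ is algebraically closed and $A$ is basic, the Wedderburn--Artin theorem gives $A/\operatorname{rad}A \cong \k^{\,n}$, with one factor $\k\bar e_i$ per vertex. Define $Q = Q_A$ to have vertex set $\{1,\dots,n\}$ and, for each ordered pair $(i,j)$, exactly $\dim_\k e_j\,(\operatorname{rad}A/\operatorname{rad}^2 A)\,e_i$ arrows from $i$ to $j$. For each arrow $\alpha\colon i \to j$ choose $a_\alpha \in e_j\,(\operatorname{rad}A)\,e_i$ whose residues $\{\bar a_\alpha\}$ form a $\k$-basis of $e_j\,(\operatorname{rad}A/\operatorname{rad}^2 A)\,e_i$. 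Sending the trivial path $\varepsilon_i$ to $e_i$ and each arrow $\alpha$ to $a_\alpha$, and extending multiplicatively, defines a $\k$-algebra homomorphism $\varphi\colon \k Q \to A$.

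The key step is the surjectivity of $\varphi$. Put $B = \operatorname{im}\varphi$, a subalgebra containing all $e_i$ and all $a_\alpha$. Because the $\bar a_\alpha$ span $\operatorname{rad}A/\operatorname{rad}^2 A$, one has $\operatorname{rad}A \subseteq B + \operatorname{rad}^2 A$; an induction on $m$ (splitting a product $xy$ with $x \in \operatorname{rad}A$ and $y \in \operatorname{rad}^m A$ using $\operatorname{rad}A \subseteq B + \operatorname{rad}^2 A$ and the inductive hypothesis for the factor $y$) upgrades this to $\operatorname{rad}^m A \subseteq B + \operatorname{rad}^{m+1}A$ for every $m \geq 1$. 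Since $A$ is finite-dimensional, $\operatorname{rad}A$ is nilpotent, say $\operatorname{rad}^N A = 0$; feeding this back through the chain of inclusions from $m = N-1$ downward forces $\operatorname{rad}A \subseteq B$. Combined with the vector-space decomposition $A = \k e_1 \oplus \cdots \oplus \k e_n \oplus \operatorname{rad}A$ and $e_i \in B$, this yields $A = B$, so $\varphi$ is onto. Hence $A \cong \k Q/\ker\varphi$, and therefore every finite-dimensional $\k$-algebra is Morita equivalent to a quotient of a path algebra. (One can additionally verify that $\ker\varphi$ is admissible: $\varphi$ carries the arrow ideal $R_Q$ onto $\operatorname{rad}A$, hence $R_Q^N \subseteq \ker\varphi$, and $\varphi$ is injective on the span of the trivial paths and the arrows, hence $\ker\varphi \subseteq R_Q^2$; this refinement is not needed for the statement as phrased.)

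I expect the main obstacle to be exactly the surjectivity argument — specifically, making the Nakayama-type induction $\operatorname{rad}^m A \subseteq B + \operatorname{rad}^{m+1}A$ interact correctly with the nilpotency of $\operatorname{rad}A$ to conclude $B = A$. By contrast, the reduction to basic algebras via a projective generator and the identification $A/\operatorname{rad}A \cong \k^{\,n}$ are routine once the Wedderburn--Artin theorem and basic facts about projective covers over Artinian rings are available.
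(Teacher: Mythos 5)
The paper states this theorem purely as classical background and gives no proof of its own, so there is nothing to compare it against. Your reconstruction is the standard Gabriel argument and it is correct: the Morita reduction to the basic algebra $eAe$ via the projective generator $Ae$, the construction of the Gabriel quiver from $\operatorname{rad}A/\operatorname{rad}^2 A$, the definition of $\varphi\colon \k Q \to A$ on trivial paths and arrows, and the surjectivity via the nilpotency of $\operatorname{rad}A$ are all exactly what is needed, and the parenthetical observation that $\ker\varphi$ is admissible is also right. The only cosmetic quibble is a small indexing hiccup in the surjectivity step: the products $xy$ with $x \in \operatorname{rad}A$ and $y \in \operatorname{rad}^m A$ establish the containment $\operatorname{rad}^{m+1}A \subseteq B + \operatorname{rad}^{m+2}A$ (i.e. the statement for exponent $m+1$), so the cleanest phrasing is to induct on the exponent of the ideal being controlled; the underlying argument is unaffected.
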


Moreover, if the algebra is hereditary, we can drop the ``a quotient of'' condition.
A ring or an algebra is said to be \textbf{hereditary} if all submodules of projective modules are projective.
Hereditary algebras are algebras with global dimension 1.
This already tells us a great deal about the homological properties of these algebras: if $\mathcal{A}$ is a hereditary algebra, then $\Ext^i_{\mathcal{A}}(M,N) = 0$ for $i > 1$ and any $M,N \in \mod \mathcal{A}$.
This restricts the study of extensions to $\Ext^1_{\mathcal{A}}$ in hereditary algebras.
Since the path algebras we are interested in (acyclic type $\widetilde{D}_n$) are hereditary, this motivates the right-hand side of Theorem~\ref{main-thm}.

%%%%%%%%%%%%%%%%%%%%%%%%%%%%%%%%%%%%%%%%%%%%%%%%%%%%%%%%%%%%%%%%%
\section{The Category of $\widetilde{D}_n$-modules}
%%%%%%%%%%%%%%%%%%%%%%%%%%%%%%%%%%%%%%%%%%%%%%%%%%%%%%%%%%%%%%%%%
This section defines the first category featured in Theorem~\ref{main-thm}--the category of modules of type $\widetilde{D}_n$. 
Preprojective module categories of algebras have been studied extensively \cite{assem_elements_2006,auslander_representation_1995,draxler_existence_1996,gabriel_representations_1997,geis_rigid_2006,geis_rigid_2007,ringel_tame_1984,simson_elements_2007}.
Due to the well-documented nature of the preprojective categories, this section mainly serves as a brief overview of topics relevant to the present paper and a deeper description of the regular modules of type $\widetilde{D}_n$.
Before we continue, it will be helpful to have an equivalent characterization of the modules over the path algebra of an acyclic quiver.

\begin{definition}
    Let $Q$ be a quiver and $\k$ be an algebraically closed field.
    A \textbf{representation} $M = (M_i, \varphi_\alpha)_{i \in Q_0, \alpha \in Q_1}$ of $Q$ is a collection of $\k$-vector spaces $M_i$ (one for each vertex in $Q_0$) and a collection of $\k$-linear maps $\varphi_\alpha: M_{s(\alpha)} \to M_{t(\alpha)}$ (one for each arrow in $Q_1$).
    The representation is \textbf{finite-dimensional} if each $M_i$ is. 
    If $M$ is finite-dimensional, the \textbf{dimension vector} $\udim M$ of $M$ is the vector $(\dim M_i)_{i \in Q_0}$ of the dimensions of the vector spaces at each vertex.

    If $M = (M_i, \varphi_\alpha)$ and $M' = (M'_i, \varphi'_\alpha)$ are two representations of $Q$, then the \textbf{direct sum} of $M$ and $M'$ is \[ M \oplus M' = \left( M_i \oplus M_i', \begin{bmatrix}
        \varphi_\alpha & 0 \\
        0 & \varphi_\alpha'
    \end{bmatrix} \right)_{i \in Q_0, \alpha \in Q_1} \] A representation $M$ of Q is \textbf{indecomposable} if it is nonzero and cannot be written as the direct sum of two nonzero representations.

    If $M = (M_i, \varphi_\alpha)$ and $M' = (M'_i, \varphi'_\alpha)$ are two representations of $Q$, a \textbf{homomorphism} of representations $f: M \to M'$ is a collection $(f_i)_{i\in Q_0}$ of linear maps $f_i: M_i \to M'_i$ such that for each arrow $\alpha: i \to j$ in $Q_1$, we have commutative diagrams $f_j \circ \varphi_\alpha(m) = \varphi'_\alpha \circ f_i(m)$ for all $m\in M_i$.
    The abelian category of all finite-dimensional representations of $Q$ with morphisms given by representation homomorphisms is denoted by $\operatorname{rep} Q$.
\end{definition}

\begin{theorem}[\cite{schiffler_quiver_2014}, Theorem 5.4]\label{reps-modules}
    Let $Q$ be a finite, connected, acyclic quiver. 
    Then, the finite-dimensional representations of $Q$ are in bijection with the finitely generated $\k Q$ modules (up to isomorphism). 
    This bijection also applies to their homomorphisms and respects the composition of these homomorphisms.
\end{theorem}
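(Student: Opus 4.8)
The plan is to upgrade the asserted bijection to an equivalence of categories $\operatorname{rep} Q \simeq \mod \k Q$: I would construct explicit functors in both directions and check that they are mutually quasi-inverse, from which the bijection on isomorphism classes of objects and the composition-compatible bijection on morphism sets both follow formally.

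First I would define $F \colon \operatorname{rep} Q \to \mod \k Q$. Given a representation $M = (M_i, \varphi_\alpha)$, set $F(M) = \bigoplus_{i \in Q_0} M_i$ as a $\k$-vector space and let $\k Q$ act by declaring that the lazy path $e_i$ acts as the projection onto $M_i$, that an arrow $\alpha \colon i \to j$ acts as $\varphi_\alpha$ on $M_i$ and as zero on $M_k$ for $k \neq i$, and that a general path $\alpha_r \cdots \alpha_1$ acts as the composite $\varphi_{\alpha_r} \circ \cdots \circ \varphi_{\alpha_1}$; then extend $\k$-linearly. The one point to verify is that this prescription respects concatenation of paths rather than merely defining the action on generators: when two paths are not composable their product is zero in $\k Q$ and the corresponding composite of the $\varphi$'s is automatically zero since each $\varphi_\alpha$ vanishes off $M_{s(\alpha)}$, and when they are composable one uses associativity of composition together with $e_{t(\alpha)}\varphi_\alpha e_{s(\alpha)} = \varphi_\alpha$. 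Because $Q$ is finite and acyclic, $\k Q$ is finite-dimensional, so $F(M)$ is finite-dimensional and in particular finitely generated. On a morphism $f = (f_i)$ of representations I set $F(f) = \bigoplus_i f_i$; the commuting squares $f_j \circ \varphi_\alpha = \varphi'_\alpha \circ f_i$ are precisely what make $F(f)$ a $\k Q$-module homomorphism, and functoriality is immediate.

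In the other direction I would define $G \colon \mod \k Q \to \operatorname{rep} Q$ by $N \mapsto (e_i N, \varphi_\alpha)$, where for $\alpha \colon i \to j$ the map $\varphi_\alpha \colon e_i N \to e_j N$ is $x \mapsto \alpha x$; this lands where claimed since $\alpha = e_j \alpha e_i$ in $\k Q$. A $\k Q$-module homomorphism $g \colon N \to N'$ satisfies $g(e_i N) \subseteq e_i N'$, so it restricts to a morphism of representations, and $G$ is clearly functorial. The key structural inputs are that $Q_0$ is finite, so $1_{\k Q} = \sum_{i \in Q_0} e_i$, and that the $e_i$ form a complete set of orthogonal idempotents; together these give a natural decomposition $N = \bigoplus_{i \in Q_0} e_i N$ for every module $N$, which is exactly what makes $F$ and $G$ mutually inverse. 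With the conventions above one gets $GF = \operatorname{id}$ and $FG = \operatorname{id}$ on the nose (otherwise canonical natural isomorphisms). An equivalence of categories restricts to a bijection between isomorphism classes of objects and to bijections $\Hom_{\operatorname{rep} Q}(M, M') \xrightarrow{\sim} \Hom_{\k Q}(FM, FM')$ compatible with composition, which is the content of the statement.

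I do not anticipate a genuine obstacle here — this is a foundational equivalence and every step is routine verification. The only places that require care are the well-definedness of the $\k Q$-action on $F(M)$ (it must be checked against the defining relations of the path algebra, not just on generators) and the fact that $G$ is defined on all of $\mod \k Q$ rather than only on modules that happen to split as $\bigoplus e_i N$, which is guaranteed by the identity $1 = \sum_i e_i$ available precisely because $Q$ has finitely many vertices. Finiteness and acyclicity of $Q$ enter only to ensure that $\k Q$ and the modules in play are finite-dimensional, so that the two sides of the bijection match.
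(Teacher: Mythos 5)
Your proof is correct and is essentially the standard argument found in the cited source (Schiffler, \emph{Quiver Representations}, Theorem 5.4); the paper itself does not prove this statement but only cites it. You construct the two functors $F$ and $G$, verify the action is well-defined on the path algebra via the orthogonal idempotent decomposition $1 = \sum_{i \in Q_0} e_i$, and observe that the resulting equivalence of categories yields the asserted bijections on objects and morphisms compatibly with composition, which is exactly the approach taken there.
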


Most modern work dealing with the representation theory of hereditary algebras $\mathcal{A} = \k Q$ make no distinction between a $\k Q$-module and its corresponding quiver representation.

Our goal by the end of this section is to understand the global and local structure of the category $\mod \k Q$.
The objects of this category are isoclasses of finitely-generated, indecomposable (left) $\k Q$ modules, and the morphisms of the category are compositions of module homomorphisms between $\k Q$ modules.
The most basic of the morphisms are the \textit{irreducible morphisms}, which we now define.

\begin{definition}\label{def-AR}
    Let $\mathcal{A} = \k Q$ be an irreducible hereditary algebra (i.e., $Q$ only has a single connected component) and let $A, B, C, M$ and $N$ be $\mathcal{A}$-modules.
    A morphism $h$ (dually $g$) in $\mod\mathcal{A}$ is a \textbf{section} (dually \textbf{retraction}) if $h$ is a right (left) inverse of some morphism in $\mod\mathcal{A}$.
    A morphism $f: A \to B$ in $\mod\mathcal{A}$ is called \textbf{irreducible} if
    \begin{enumerate}
        \item $f$ is not a section nor a retraction
        \item whenever $f = gh$ for some morphisms $h: A \to C$ and $g: C \to B$, then either $h$ is a section or $g$ is a retraction.
    \end{enumerate}
\end{definition}

We now define the almost split sequences in $\mod \ kQ$.

\begin{definition}
    A short exact sequence in $\mod\mathcal{A}$ $$0 \to A \to B \to C \to 0$$ is \textbf{split} if $B \cong A \oplus C$.
    A short exact sequence in $\mod\mathcal{A}$ $$0 \to N \xrightarrow{h} B \xrightarrow{g} M \to 0$$ is \textbf{almost split} (or is an \textbf{Auslander-Reiten sequence}) if $M$ and $N$ are indecomposable and $h$ and $g$ are irreducible morphisms.
    In this case, $N$ is uniquely determined and $N \cong \tau M$ where $\tau$ is the \textbf{Auslander-Reiten translation}.
\end{definition}

The projective objects in $\mod \k Q$ are precisely the projective modules. 
These modules (and their dual injective modules) play an important role in the global structure of $\mod \k Q$, so we define them below.

\begin{definition}[\cite{schiffler_quiver_2014}]
    Let $Q$ be an acyclic quiver. 
    The indecomposable \textbf{projective module} at vertex $i$, denoted $P(i)$, has the quiver representation \[ P(i) = (P(i)_j, \varphi_\alpha)_{j \in Q_0, \alpha \in Q_1} \]
    where $P(i)_j$ is the $\k$-vector space with basis the set of all paths from $i$ to $j$ in $Q$.
    The indecomposable \textbf{injective module} at vertex $i$, denoted $I(i)$, has the quiver representation \[ I(i) = (I(i)_j, \varphi_\alpha)_{j \in Q_0, \alpha \in Q_1} \]
    where $I(i)_j$ is the $\k$-vector space with basis the set of all paths from $j$ to $i$ in $Q$.
\end{definition}

If $M$ is not projective ($N$ is not injective), then there exists an Auslander-Reiten sequence ending at $M$ and starting at $\tau M$ (starting at $N$ and ending at $\tau^{-1}N$).
We are now ready to define the 3 major components of the Auslander-Reiten quiver for the modules over an affine-type Dynkin quiver.
A ``big picture'' diagram of the Auslander-Reiten quiver of type $\widetilde{D}_n$ can be found in Figure~\ref{fig:modules category}.

\begin{definition}
    Let $P(j)$ be the indecomposable projective module at vertex $j$, $I(j)$ be the indecomposable injective module at vertex $j$, and $\tau$ be the Auslander-Reiten translation. 
    Then a $\k Q$-module $M$ is called \textbf{preprojective} if $\tau^{i}M \cong P(j)$ for some $j \in Q_0$ and $i \geq 0$, \textbf{preinjective} if $\tau^{-i}M \cong I(j)$ for some $j \in Q_0$ and $i \geq 0$, and \textbf{regular} if $\tau^{i}M \cong M$ for some $i \geq 1$.
\end{definition}

\begin{figure}[h]
\centering
\resizebox{\textwidth}{!}{
\begin{tikzpicture}
% Left Rectangle (Projective Modules)
\draw[thick] (4,5) -- (0,5) -- (2,3) -- (1,2) -- (3,0) -- (4,0);
\node[rotate=-45,scale=1.2] at (0.8,3.6) {Projective};
\node[rotate=-45,scale=1.2] at (1.5,1) {Modules};

\node at (4,2.5) {$\bullet$};
\node at (4.2,2.5) {$\bullet$};
\node at (4.4,2.5) {$\bullet$};

% regular tubes
\draw[thick] (6,5) ellipse [x radius=0.5, y radius=0.2];
\draw[thick] (5.5,5) -- (5.5,0);
\draw[thick] (6.5,5) -- (6.5,0);
\draw[thick] (7.5,5) ellipse [x radius=0.5, y radius=0.2];
\draw[thick] (7,5) -- (7,0);
\draw[thick] (8,5) -- (8,0);
\draw[thick] (9.5,5) ellipse [x radius=1, y radius=0.2];
\draw[thick] (8.5,5) -- (8.5,0);
\draw[thick] (10.5,5) -- (10.5,0);

\node at (11.3,2.5) {$\bullet$};
\node at (11.5,2.5) {$\bullet$};
\node at (11.7,2.5) {$\bullet$};

\draw[thick] (13,5) ellipse [x radius=0.3, y radius=0.2];
\draw[thick] (12.7,5) -- (12.7,0);
\draw[thick] (13.3,5) -- (13.3,0);
\draw[thick] (14,5) ellipse [x radius=0.3, y radius=0.2];
\draw[thick] (13.7,5) -- (13.7,0);
\draw[thick] (14.3,5) -- (14.3,0);

\node at (15.2,2.5) {$\bullet$};
\node at (15,2.5) {$\bullet$};
\node at (14.8,2.5) {$\bullet$};

% Right Rectangle (Projective Modules, Mirror Image) with 2-rectangle gap
\draw[thick] (15,0) -- (20,0) -- (18,2) -- (19,3) -- (17,5) -- (15,5);
\node[rotate=-45,scale=1.2] at (18.5,4) {Injective};
\node[rotate=-45,scale=1.2] at (19,1.5) {Modules};

\end{tikzpicture}}
\caption{A diagram of the Auslander-Reiten quiver for modules of type $\widetilde{D}_n$}
\label{fig:modules category}
\end{figure}
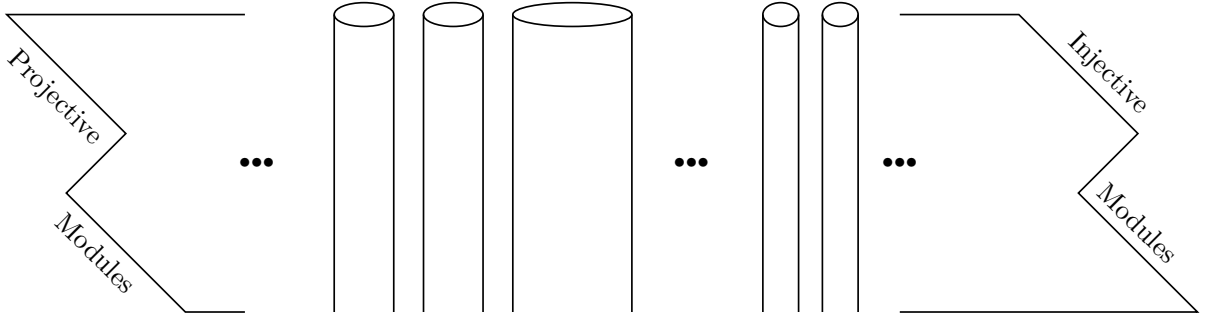

\begin{definition}
    Let $\mathcal{A} = \k Q$ be an irreducible hereditary algebra. Then the \textbf{Auslander-Reiten quiver} $\Gamma(\mod \mathcal{A} )$ is defined as follows:
    \begin{itemize}
        \item The vertices of $\Gamma( \mod \mathcal{A} )$ are the isomorphism classes $[M]$ of indecomposable modules $M$ in $\mod \mathcal{A}$.
        \item There is an arrow $[M] \to [N]$ in $\Gamma( \mod \mathcal{A} )$ if and only if there is an irreducible morphism $M \to N$ in $\mod \mathcal{A}$.
    \end{itemize}
    Auslander-Reiten sequences form the \textbf{meshes} of $\Gamma(\mod \mathcal{A})$.
    If $\mathcal{A}$ is a representation-infinite hereditary algebra, then the \textbf{preprojective component} of $\Gamma( \mod \mathcal{A} )$ is denoted by $\mathcal{P(A)}$ and is the unique connected component of $\Gamma( \mod \mathcal{A} )$ that contains all indecomposable projective $\mathcal{A}$-modules, the \textbf{preinjective component} of $\Gamma( \mod \mathcal{A} )$ is denoted by $\mathcal{Q(A)}$ and is the unique connected component of $\Gamma( \mod \mathcal{A} )$ that contains all indecomposable injective $\mathcal{A}$-modules, and the \textbf{regular component} of $\Gamma( \mod \mathcal{A} )$ is denoted by $\mathcal{R(A)}$ and is the disjoint union of stable tubes $\{ \mathcal{T}^{\lambda} \}_{\lambda \in \mathbb{P}_1}$ which contain all indecomposable regular $\mathcal{A}$-modules.
\end{definition}

The Auslander-Reiten translation serves as a horizontal right-to-left translation through the Auslander-Reiten quiver.

For path algebras of acyclic type $A_n, D_n,$ and $E_i$ for $i = 6,7,8$, the Auslander-Reiten quiver is finite and connected. 
This is because these algebras are of finite representation type, and this classification is due to Gabriel's Theorem \cite{gabriel_unzerlegbare_1972}.
In this case, all modules are preprojective (and preinjective), and the regular component is empty.

If the path algebra is of acyclic type $\widetilde{A}_n, \widetilde{D}_n,$ or $\widetilde{E}_i$ for $i = 6,7,8$, then the situation is more complicated.
If the quiver is of type $\widetilde{D}_n$, the Auslander-Reiten quiver will have 3 distinct components: the preprojective component containing all of the projective modules, the preinjective component containing all of the injective modules, and the regular component, which is a disjoint union of infinitely many ``stable tubes''.
Each of these components has a similar local structure: the meshes.
However, the global structure of the regular component is unlike the preprojective and preinjective components, which are dual to one another. 
The geometric model presented in this paper is for the entire module category. 
There are three types of meshes that appear in an Auslander-Reiten quiver of type $\widetilde{D}_n$ ($n > 4$), these are displayed in Figure~\ref{fig:meshes}. 
There is one more mesh that is found in Auslander-Reiten quivers of type $\widetilde{D}_4$ that looks like the mesh in the bottom right of Figure~\ref{fig:meshes}, but has 4 middle terms instead of 3.

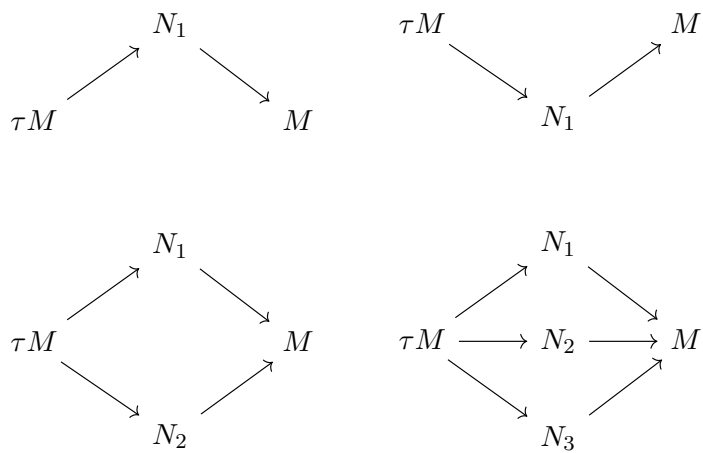
\begin{figure}[h]
    \[\begin{tikzcd}
     & N_1 \arrow{dr} & \\
     \tau M\arrow{ur} &  & M
    \end{tikzcd}\hspace{8mm}
    \begin{tikzcd}
     \tau M\arrow{dr} &  & M \\
     & N_1 \arrow{ur} & 
    \end{tikzcd}\]
    \vspace{8mm}
    \[\begin{tikzcd}
     & N_1 \arrow{dr} & \\
     \tau M\arrow{ur}\arrow{dr} &  & M \\
      & N_2\arrow{ur} &
    \end{tikzcd}\hspace{8mm}
    \begin{tikzcd}
     & N_1 \arrow{dr} & \\
     \tau M\arrow{ur}\arrow{dr}\arrow{r} & N_2\arrow{r} & M \\
      & N_3\arrow{ur} &
    \end{tikzcd}\]  
    \caption{Meshes of the Auslander-Reiten quiver of type $\widetilde{D}_n$ ($n > 4$).}
    \label{fig:meshes}
\end{figure}

We now introduce a fundamental result, the Auslander-Reiten formulas, and brief definitions of the operations that appear in them.
For a more in-depth treatment of these topics, see \cite{assem_elements_2006,auslander_representation_1995,schiffler_quiver_2014}. In what follows, for representations $M,N$, let $\Hom(M,N)$ be the $\k$-vector space of all homomorphism from $M$ to $N$ and $\Ext^1(M,N)$ be the $\k$-vector space of extensions of $M$ by $N$. 

\begin{definition}
    Let $Q^{op}$ be the quiver obtained from $Q$ by reversing the direction of all the arrows.
    The \textbf{duality} $$D = \Hom_\k (-,\k): \operatorname{rep} Q \to \operatorname{rep} Q^{op}$$ is the contravariant functor defined as follows:
    \begin{itemize}
        \item For representations $M = (M_i, \varphi_\alpha)$, we have $$DM = (DM_i, D\varphi_{\alpha^{op}})_{i\in Q_0, \alpha \in Q_1},$$ where $DM_i$ is the dual vector space and if $\alpha$ is an arrow in $Q$ then $D\varphi_{\alpha^{op}}$ is the pullback of $\varphi_\alpha$ \begin{align*}
            D\varphi_{\alpha^{op}}: DM_{t(\alpha)} &\to DM_{s(\alpha)} \\
                                    u &\mapsto u \circ \varphi_\alpha.
        \end{align*}
        \item For homomorphisms $f:M \to N$ in $\operatorname{rep} Q$, we have $Df : DN \to DM$ in $\operatorname{rep} Q^{op}$ defined by $Df(u) = u \circ f$.
    \end{itemize}
\end{definition}

\begin{definition} \label{linehome}
    Let $P(M,N)$ be the set of all homomorphisms $f \in \Hom(M,N)$ such that $f$ factors through a projective $\mathcal{A}$-module, and define $$\underline\Hom(M,N) = \Hom(M,N)/P(M,N).$$
    Dually, let $I(M,N)$ be the set of all homomorphisms $f \in \Hom(M,N)$ such that $f$ factors through an injective $\mathcal{A}$-module, and define $$\overline\Hom(M,N) = \Hom(M,N)/I(M,N).$$
\end{definition}

We now state the Auslander-Reiten formulas.

\begin{theorem}[Auslander-Reiten formulas]\label{them-AR}
    Let $M,N$ be $\mathcal{A}$-modules. 
    Then there are isomorphisms $$\Ext^1(M,N) \cong D \underline{\Hom} (\tau^{-1}N, M) \cong D \overline{\Hom} (N, \tau M).$$
    If $\mathcal{A}$ is a hereditary algebra (i.e., $\mathcal{A} \cong \k Q$ for some acyclic quiver $Q$), then $\underline\Hom(M,N) = \overline\Hom(M,N) = \Hom(M,N)$ and the formulas become $$\Ext^1(M,N) \cong D \Hom (\tau^{-1}N, M) \cong D \Hom (N, \tau M).$$
\end{theorem}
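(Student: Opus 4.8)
The plan is to prove the first isomorphism $\Ext^1(M,N) \cong D\,\underline{\Hom}(\tau^{-1}N, M)$ and then obtain the second, $D\,\overline{\Hom}(N,\tau M)$, by a dual argument (applying the first to $Q^{op}$ via the duality $D$). Since it suffices by additivity to treat $M$ and $N$ indecomposable, I would begin with a minimal projective presentation of $N$, say $P_1 \xrightarrow{p_1} P_0 \to N \to 0$. Because $\mathcal{A} = \k Q$ is hereditary, this is in fact a short exact sequence $0 \to P_1 \xrightarrow{p_1} P_0 \to N \to 0$ with $P_1$ projective, which is the feature that collapses $\underline{\Hom}$ to $\Hom$ at the end. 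Applying $\Hom(-, \mathcal{A})$ (or equivalently the Nakayama/transpose machinery) to this presentation produces, by definition, the transpose $\operatorname{Tr} N$ as the cokernel of $\Hom(p_1,\mathcal{A})$, and then $\tau N = D\operatorname{Tr} N$; dually $\tau^{-1}N = \operatorname{Tr} DN$.

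The key computational step is to apply $\Hom(-, M)$ to the presentation of $N$ and the contravariant functor $\Hom(N, -)$-style bookkeeping to identify the cokernel term with $\Ext^1$. Concretely, I would write down the commutative diagram relating $\Hom(P_0, M) \to \Hom(P_1, M)$ with the defining sequence for $\operatorname{Tr} N$, use that $\Hom(\mathcal{A}, -)$ is exact and that $\Hom(P_i, M) \cong DM \otimes (\text{something})$ via the standard adjunction $\Hom_\mathcal{A}(P, M) \cong D\Hom_\mathcal{A}(M, D\!P^{\,t})$-type identity, and then chase the diagram to get $\Ext^1(M,N) \cong D(\operatorname{coker})$ where the cokernel is precisely $\underline{\Hom}(\tau^{-1}N, M)$. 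The cleanest route is probably to invoke the standard lemma that for a minimal projective presentation of $N$ one has a natural exact sequence
\[
\Hom(M,N) \to \Hom(P_0^{*}\!\otimes M, -) \to \cdots
\]
but in practice I would follow the diagram-chase in Auslander--Reiten--Smalø or Assem--Simson--Skowro\'nski verbatim, since the result is classical; the paper cites \cite{assem_elements_2006,auslander_representation_1995,schiffler_quiver_2014} precisely so that this can be quoted rather than rederived.

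The main obstacle is bookkeeping rather than conceptual: one must carefully track which homomorphisms factor through projectives (resp.\ injectives) to see that the relevant cokernel is genuinely the \emph{stable} Hom group $\underline{\Hom}$ and not the ordinary Hom group, and then separately observe that in the hereditary case the submodule $P_1$ of $P_0$ being projective forces $P(\tau^{-1}N, M) = 0$, so $\underline{\Hom} = \Hom$ there; symmetrically $I(N, \tau M) = 0$ gives $\overline{\Hom} = \Hom$. For the hereditary simplification I would note that any map $\tau^{-1}N \to M$ factoring through a projective would, composed with the epimorphism from a projective cover, exhibit a direct summand contradiction using that $\tau^{-1}$ of a module has no projective summands (and dually $\tau M$ has no injective summands), which is where the acyclicity of $Q$ and Gabriel-type structure of $\mod \k Q$ enter. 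Once the general isomorphism and these two vanishing statements are in hand, the displayed hereditary formulas follow immediately.
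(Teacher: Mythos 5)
The paper does not prove Theorem~\ref{them-AR}; it is stated as a classical result and the reader is referred to the surrounding citations (\cite{assem_elements_2006,auslander_representation_1995,schiffler_quiver_2014}) for a full treatment, so there is no internal proof to compare against. Your outline correctly reproduces the standard textbook argument: take a minimal projective presentation of $N$, form the transpose to realize $\tau^{-1}N = \operatorname{Tr}DN$, apply $\Hom(-,M)$ and identify the relevant cokernel with $\underline{\Hom}(\tau^{-1}N,M)$, then dualize for the second isomorphism. One small imprecision worth fixing is the justification of $\underline\Hom=\Hom$ in the hereditary case: the clean argument is not about composing with a projective cover, but simply that if $f:\tau^{-1}N\to M$ factors as $h\circ g$ through a projective $P$, then $\operatorname{Im}(g)\subseteq P$ is itself projective (hereditary), so the surjection $\tau^{-1}N\twoheadrightarrow\operatorname{Im}(g)$ splits and $\operatorname{Im}(g)$ is a projective direct summand of $\tau^{-1}N$; since $\tau^{-1}N$ has no nonzero projective summands when $N$ is non-injective, $\operatorname{Im}(g)=0$ and $f=0$, with the dual argument giving $\overline\Hom=\Hom$. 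It is the hereditary property per se, not the acyclicity of $Q$, that drives this last step (acyclicity only enters as the reason $\k Q$ is hereditary and finite-dimensional).
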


The most immediate corollary of Theorem~\ref{them-AR} is as follows:

\begin{corollary} \label{ext-hom}
    Let $M,N$ be $\mathcal{A}$-modules. Then $$\dim_\k \Ext^1(M,N) = \dim_\k \Hom(\tau^{-1}N, M) = \dim_\k \Hom (N,\tau M).$$
\end{corollary}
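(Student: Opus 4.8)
The plan is to deduce this directly from the hereditary case of Theorem~\ref{them-AR} together with the elementary observation that dualizing preserves dimension. First I would invoke the standing convention that all modules under consideration are finite-dimensional, so that $\Hom(\tau^{-1}N, M)$ and $\Hom(N, \tau M)$ are finite-dimensional $\k$-vector spaces (being subspaces of products of $\Hom_\k$ between the finitely many finite-dimensional vertex spaces), and hence so is $\Ext^1(M,N)$.

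Next, since $\mathcal{A} = \k Q$ is hereditary, the second half of Theorem~\ref{them-AR} applies and gives $\k$-linear isomorphisms
$$\Ext^1(M,N) \cong D\Hom(\tau^{-1}N, M) \cong D\Hom(N, \tau M).$$
Now I would use the fact that for any finite-dimensional $\k$-vector space $V$ one has $\dim_\k DV = \dim_\k \Hom_\k(V, \k) = \dim_\k V$. Applying this to $V = \Hom(\tau^{-1}N, M)$ and $V = \Hom(N, \tau M)$, and taking dimensions across the displayed isomorphisms, yields
$$\dim_\k \Ext^1(M,N) = \dim_\k \Hom(\tau^{-1}N, M) = \dim_\k \Hom(N, \tau M),$$
which is exactly the claim.

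There is no real obstacle here; the statement is an immediate corollary, and the only points that need to be mentioned are that the isomorphisms of Theorem~\ref{them-AR} are isomorphisms of $\k$-vector spaces and that the spaces involved are finite-dimensional, so that passing to $\dim_\k$ is legitimate. (One could alternatively remark that it suffices to prove a single equality, say $\dim_\k \Ext^1(M,N) = \dim_\k \Hom(N, \tau M)$, and then obtain the other by replacing $N$ with $\tau^{-1}N$ and using that $\tau$ induces a bijection on isoclasses of non-projective/non-injective indecomposables; but the direct argument above is cleaner.)
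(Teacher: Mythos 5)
Your argument is correct and is precisely the route the paper intends: apply the hereditary case of Theorem~\ref{them-AR} and use that the $\k$-duality $D$ preserves dimension of finite-dimensional spaces (the paper simply labels the corollary ``immediate'' without writing this out). No discrepancy to report.
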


The following corollary and theorem will be very helpful in simplifying the proof of the main theorem.

\begin{corollary} \label{ext-tau-invariant}
    Let $M,N$ be non-projective $\k Q$-modules for some acyclic quiver $Q$. Then $$\Ext^1(\tau M,\tau N) \cong \Ext^1(M,N).$$
\end{corollary}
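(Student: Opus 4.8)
The plan is to apply Corollary~\ref{ext-hom} twice and use the fact that $\tau$ is an equivalence between suitable subcategories. Since $M$ and $N$ are non-projective indecomposable (or finite direct sums of such) $\k Q$-modules, $\tau M$ and $\tau N$ are well-defined, and moreover $\tau N$ is non-injective is \emph{not} what we need; rather, we need that $\tau^{-1}$ is defined on $\tau N$, which holds since $\tau N$ lies in the image of $\tau$, so $\tau^{-1}\tau N \cong N$. By Corollary~\ref{ext-hom} applied to the pair $(\tau M, \tau N)$,
$$\dim_\k \Ext^1(\tau M, \tau N) = \dim_\k \Hom(\tau^{-1}\tau N, \tau M) = \dim_\k \Hom(N, \tau M).$$
On the other hand, Corollary~\ref{ext-hom} applied to the pair $(M,N)$ gives $\dim_\k \Ext^1(M,N) = \dim_\k \Hom(N, \tau M)$. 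Comparing the two yields equality of dimensions. To upgrade this to an actual isomorphism of vector spaces (as the statement asserts), I would instead run the Auslander-Reiten formula of Theorem~\ref{them-AR} directly: it gives natural isomorphisms $\Ext^1(\tau M, \tau N) \cong D\Hom(\tau^{-1}\tau N, \tau M) \cong D\Hom(N, \tau M)$ and $\Ext^1(M,N) \cong D\Hom(N, \tau M)$, and composing one with the inverse of the other produces the desired isomorphism $\Ext^1(\tau M, \tau N) \cong \Ext^1(M,N)$.

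The key steps, in order, are: first, check that the hypothesis ``$M, N$ non-projective'' guarantees $\tau M$ and $\tau N$ exist and that $\tau^{-1}(\tau N) \cong N$ canonically (this is the standard fact that $\tau$ restricts to a bijection from non-projective indecomposables to non-injective indecomposables, with inverse $\tau^{-1}$); second, invoke Theorem~\ref{them-AR} for the pair $(\tau M, \tau N)$ to get $\Ext^1(\tau M, \tau N) \cong D\Hom(\tau^{-1}\tau N, \tau M)$; third, substitute $\tau^{-1}\tau N \cong N$ to rewrite this as $D\Hom(N, \tau M)$; fourth, invoke Theorem~\ref{them-AR} for the pair $(M, N)$ to get $\Ext^1(M, N) \cong D\Hom(N, \tau M)$; fifth, conclude by transitivity of $\cong$. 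If one only wants the numerical statement, steps two through four can be replaced by two applications of Corollary~\ref{ext-hom} as above.

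The main (and essentially only) obstacle is the bookkeeping around $\tau$ and $\tau^{-1}$: one must be careful that $\tau N$ need not be non-injective in general, so $\tau^{-1}$ might a priori fail to be defined on it --- but in fact $\tau^{-1}$ is always defined on any module of the form $\tau N$ (this is precisely what it means for $\tau^{-1}$ to be a well-defined inverse on the image of $\tau$), so the composite $\tau^{-1}\tau N$ makes sense and equals $N$. A secondary subtlety, if one cares about functoriality, is whether the isomorphism is natural in $M$ and $N$; since both isomorphisms in Theorem~\ref{them-AR} are natural, the composite is as well, but for the bare statement as written only the existence of a linear isomorphism is required. Once this is sorted, the proof is a two-line diagram chase, so I would keep the writeup short: state that $\tau^{-1}$ is defined on $\tau N$ with $\tau^{-1}\tau N \cong N$, then chain the Auslander-Reiten isomorphisms.
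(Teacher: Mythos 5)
Your final version of the argument (using Theorem~\ref{them-AR} directly rather than Corollary~\ref{ext-hom}) is exactly the paper's proof: apply the Auslander--Reiten formula to the pair $(\tau M, \tau N)$ to get $\Ext^1(\tau M,\tau N) \cong D\Hom(\tau^{-1}\tau N, \tau M) = D\Hom(N, \tau M)$, then recognize the right-hand side as $\Ext^1(M,N)$ by the same formula applied to $(M,N)$. Your added care about $\tau^{-1}$ being defined on $\tau N$ and about naturality is sound but beyond what the paper spells out.
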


\begin{proof}
    By the Auslander-Reiten formulas, $$\Ext^1(\tau M,\tau N) \cong D \Hom (\tau^{-1} \tau N, \tau M) =  D \Hom (N, \tau M) \cong \Ext^1(M,N).$$
\end{proof}

\begin{theorem}\label{thm-0-homs}
    Let $\Gamma(\mod \k Q )$ be the Auslander-Reiten quiver of the module category of the path algebra over an acyclic quiver of type $\widetilde{D}_n$. 
    Then \[ \Hom (\mathcal{Q}(\k Q) ,\mathcal{P}(\k Q)) = \Hom (\mathcal{R}(\k Q) ,\mathcal{P}(\k Q)) = \Hom (\mathcal{Q}(\k Q) ,\mathcal{R}(\k Q)) = 0. \]
\end{theorem}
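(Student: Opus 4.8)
The plan is to prove the three vanishing statements by exploiting the left-to-right flow of morphisms through the Auslander-Reiten quiver, which is governed by the Auslander-Reiten translation. The key structural input is that $\tau$ acts as a right-to-left translation: every preprojective module is of the form $\tau^{-i}P(j)$, every preinjective module is of the form $\tau^{i}I(j)$, and every regular module $R$ satisfies $\tau^{i}R \cong R$ for some $i \geq 1$, so that $\tau^{-k}R$ stays regular for all $k \in \ZZ$. I would first record the elementary but crucial fact that for indecomposable modules $X, Y$ lying in components of the Auslander-Reiten quiver, a nonzero morphism $X \to Y$ forces a directed path from $[X]$ to $[Y]$ in $\Gamma(\mod \k Q)$, hence the two modules lie in the same connected component or in components "connected in the correct direction"; combined with the fact that $\mathcal{P}(\k Q)$, $\mathcal{Q}(\k Q)$, and $\mathcal{R}(\k Q)$ are pairwise disjoint connected components (or unions thereof), morphisms can only go from $\mathcal{P}$ toward $\mathcal{R}$ toward $\mathcal{Q}$, never backward.

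First I would handle $\Hom(\mathcal{R}(\k Q), \mathcal{P}(\k Q)) = 0$. Let $R$ be an indecomposable regular module and $P = \tau^{-i}P(j)$ a preprojective module. Suppose $f \colon R \to P$ is nonzero. Using Corollary~\ref{ext-hom} and the Auslander-Reiten formulas in the form $\Hom(R, P) \cong D\Hom(\tau^{-1}P, R)$ is not directly available since $P$ may be projective, but I can instead iterate: since $R$ is regular, $R \cong \tau^{i+1}R'$ for a suitable regular $R'$, and $\Hom(R, \tau^{-i}P(j))$ can be compared with $\Hom(\tau^{i}R, P(j))$-type expressions by repeatedly applying the hereditary Auslander-Reiten formula $\Hom(\tau M, \tau N) \cong \Hom(M,N)$ for non-projective $M,N$ (the stable analogue of Corollary~\ref{ext-tau-invariant}). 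After shifting, one reduces to $\Hom(R'', P(j))$ with $R''$ still regular. But any homomorphism into a projective module $P(j)$ from an indecomposable non-projective module must have image contained in the radical of $P(j)$, and a standard induction on the preprojective rank shows that a regular module admits no nonzero map into any preprojective module: a nonzero map $R'' \to P(j)$ would, by the mesh structure and the fact that $P(j)$ has no predecessors in its component, be impossible because every irreducible map landing in a preprojective module comes from a preprojective module. I would phrase this cleanly via: no path in $\Gamma(\mod\k Q)$ leads from a regular vertex into the preprojective component, because the preprojective component is closed under predecessors (if $\tau^{-i}P(k) \to Y$ is irreducible then $Y$ is preprojective). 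Hence $f = 0$.

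The statement $\Hom(\mathcal{Q}(\k Q), \mathcal{R}(\k Q)) = 0$ is formally dual to the one just proved, obtained by applying the duality functor $D \colon \operatorname{rep} Q \to \operatorname{rep} Q^{op}$, which swaps preprojective and preinjective components, fixes the regular component (up to the induced identification), and reverses the direction of morphisms; so $\Hom(\mathcal{Q}(\k Q), \mathcal{R}(\k Q)) \cong D\Hom(\mathcal{R}(\k Q^{op}), \mathcal{P}(\k Q^{op})) = 0$ by the first case. Finally $\Hom(\mathcal{Q}(\k Q), \mathcal{P}(\k Q)) = 0$ follows by the same "component-closed-under-predecessors" argument: the preprojective component admits no predecessors outside itself, so any nonzero map from a preinjective (or regular) module into it is impossible; alternatively one composes the two previous facts together with the observation that a map $\mathcal{Q} \to \mathcal{P}$ would have to factor compatibly with the directed structure.

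The main obstacle is making the "directed path" heuristic rigorous: a nonzero homomorphism between indecomposables need not factor as a composite of irreducible maps in general, so I cannot literally say "a nonzero $\Hom$ gives a path in $\Gamma$." The clean fix, which I would emphasize, is the standard structural result for representation-infinite hereditary algebras (see \cite{assem_elements_2006, ringel_tame_1984}): $\Hom(X,Y) = 0$ whenever $X \in \mathcal{R}(\k Q) \cup \mathcal{Q}(\k Q)$ and $Y \in \mathcal{P}(\k Q)$, and whenever $X \in \mathcal{Q}(\k Q)$ and $Y \in \mathcal{R}(\k Q)$ — this is precisely because the three components are ordered $\mathcal{P}(\k Q) \prec \mathcal{R}(\k Q) \prec \mathcal{Q}(\k Q)$ and each is sincere/closed under the appropriate direction of the translation. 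So in practice the proof is a citation together with the reduction via $\tau$-periodicity that keeps regular modules regular; the genuinely new content is only the bookkeeping that places type $\widetilde{D}_n$ in this general framework, which is immediate since $\k Q$ of acyclic type $\widetilde{D}_n$ is a tame hereditary algebra.
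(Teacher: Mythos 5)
The paper does not actually prove Theorem~\ref{thm-0-homs}; it is stated without proof as a ``well-known fact,'' so there is no in-paper argument to compare yours against. Your proposal supplies the standard structural reasoning and is correct in substance. However, your presentation is more tangled than it needs to be: the $\tau$-shift reduction you describe is already a complete, self-contained argument, and the ``directed path in $\Gamma$'' worry you raise afterward is a red herring that applies to a different (weaker) line of attack, not to the one you actually give.

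To spell out why your first argument is already rigorous: for $M,N$ indecomposable non-projective over a hereditary algebra, $\tau$ gives $\Hom(M,N)\cong\Hom(\tau M,\tau N)$. If $R$ is regular and $P=\tau^{-i}P(j)$, then $R$, $\tau R$, $\tau^2 R,\ldots$ are all regular (hence non-projective) and $\tau^{-i}P(j),\tau^{-(i-1)}P(j),\ldots,\tau^{-1}P(j)$ are all non-projective, so you may shift $i$ times to get $\Hom(R,\tau^{-i}P(j))\cong\Hom(\tau^i R,P(j))$. Then $\Hom(\tau^i R,P(j))=0$ for a reason that needs no mesh or predecessor analysis at all: over a hereditary algebra the image of any map into $P(j)$ is a submodule of a projective, hence projective; the surjection $\tau^i R\twoheadrightarrow\mathrm{im}(f)$ then splits; indecomposability of $\tau^i R$ forces $\mathrm{im}(f)=0$ or $\mathrm{im}(f)=\tau^i R$, and the latter would make $\tau^i R$ projective, contradicting regularity. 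The same shift-and-hereditariness argument gives $\Hom(\mathcal{Q},\mathcal{P})=0$ directly (replace $R$ by a preinjective module, which is also non-projective), and duality via $D$ to $Q^{op}$ handles $\Hom(\mathcal{Q},\mathcal{R})=0$ exactly as you say. Two small slips worth fixing: (i) you write ``$\Hom(R,P)\cong D\Hom(\tau^{-1}P,R)$'' as ``the Auslander-Reiten formula,'' but the paper's Theorem~\ref{them-AR} is the $\Ext$--$\Hom$ formula, not a $\Hom$--$\Hom$ identity of that shape; and (ii) you state ``if $\tau^{-i}P(k)\to Y$ is irreducible then $Y$ is preprojective,'' which is a statement about successors, whereas closure under predecessors of the preprojective component is the fact you actually want (both are true, since irreducible maps stay within an AR component, but only the latter is relevant). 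Falling back on a citation to \cite{assem_elements_2006,ringel_tame_1984} is of course also acceptable and matches what the paper itself does.
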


Thanks to Corollary~\ref{ext-tau-invariant} and Theorem~\ref{thm-0-homs}, we can simplify the exposition quite a bit.
Previous work has handled the case where $M$ and $N$ are preprojective/preinjective. 
This leaves the cases where at least one of the two modules is regular. However, the preprojective and preinjective components are dual to one another, Corollary~\ref{ext-tau-invariant} says we can apply $\tau$ to any preprojective module until it becomes projective, and there are no nonzero homomorphisms going from the regular component to the preprojective component.
Therefore, we need only consider the case when $M$ is regular and $N$ is projective or regular.

We close this section with some motivating examples. 

\begin{example}\label{running-exa}
    Let $Q$ be the following canonically oriented quiver of type $\widetilde{D}_6$
    \begin{center}
    \begin{tikzpicture}
        \filldraw[black] (0,0) circle (2pt);
        \node[scale=0.7] at (0, 2.3) {1};
        \filldraw[black] (0,2) circle (2pt);
        \node[scale=0.7] at (0, -0.3) {2};
        \filldraw[black] (1,1) circle (2pt);
        \node[scale=0.7] at (1, 1.3) {3};
        \filldraw[black] (2,1) circle (2pt);
        \node[scale=0.7] at (2, 1.3) {4};
        \filldraw[black] (3,1) circle (2pt);
        \node[scale=0.7] at (3, 1.3) {5};
        \filldraw[black] (4,2) circle (2pt);
        \node[scale=0.7] at (4, 2.3) {6};
        \filldraw[black] (4,0) circle (2pt);
        \node[scale=0.7] at (4, -0.3) {7};
        
        \draw[thick,->, shorten >=5, shorten <=5, >=stealth'](0,2)to(1,1);
        \draw[thick,->, shorten >=5, shorten <=5, >=stealth'](0,0)to(1,1);
        \draw[thick,->, shorten >=5, shorten <=5, >=stealth'](1,1)to(2,1);
        \draw[thick,->, shorten >=5, shorten <=5, >=stealth'](2,1)to(3,1);
        \draw[thick,->, shorten >=5, shorten <=5, >=stealth'](3,1)to(4,2);
        \draw[thick,->, shorten >=5, shorten <=5, >=stealth'](3,1)to(4,0);
    \end{tikzpicture}
    \end{center}
    The path algebra of this quiver is also isomorphic to an upper triangular matrix algebra. 
    Below is the projective representation at vertex 3 $P(3)$, the injective representation at vertex 3 $I(3)$, and another family of modules we will call $M_\lambda$.
    The maps without labels are understood to be either $0$ or the identity so that the representation is indecomposable. 
    The modules $M_\lambda$ are indecomposable for all choices of $\lambda \in \mathbb{P}^1(\k) = \mathbb{P}^1$ (the choice of $\lambda = \infty$ corresponds to the map $[0 \ 1]:\k^2 \to \k$).
    \begin{center}
    \begin{tikzpicture}
    \begin{scope}[shift={(0,0)}]
        \node at (0,2) {$0$};
        \node at (0,0) {$0$};
        \node at (1,1) {$\k$};
        \node at (2,1) {$\k$};
        \node at (3,1) {$\k$};
        \node at (4,2) {$\k$};
        \node at (4,0) {$\k$};
        
        \draw[thick,->, shorten >=8, shorten <=8, >=stealth'](0,2)to(1,1);
        \draw[thick,->, shorten >=8, shorten <=8, >=stealth'](0,0)to(1,1);
        \draw[thick,->, shorten >=8, shorten <=8, >=stealth'](1,1)to(2,1);
        \draw[thick,->, shorten >=8, shorten <=8, >=stealth'](2,1)to(3,1);
        \draw[thick,->, shorten >=8, shorten <=8, >=stealth'](3,1)to(4,2);
        \draw[thick,->, shorten >=8, shorten <=8, >=stealth'](3,1)to(4,0); 
    \end{scope}
    \begin{scope}[shift={(5.5,0)}]
        \node at (0,2) {$\k$};
        \node at (0,0) {$\k$};
        \node at (1,1) {$\k$};
        \node at (2,1) {$0$};
        \node at (3,1) {$0$};
        \node at (4,2) {$0$};
        \node at (4,0) {$0$};
        
        \draw[thick,->, shorten >=8, shorten <=8, >=stealth'](0,2)to(1,1);
        \draw[thick,->, shorten >=8, shorten <=8, >=stealth'](0,0)to(1,1);
        \draw[thick,->, shorten >=8, shorten <=8, >=stealth'](1,1)to(2,1);
        \draw[thick,->, shorten >=8, shorten <=8, >=stealth'](2,1)to(3,1);
        \draw[thick,->, shorten >=8, shorten <=8, >=stealth'](3,1)to(4,2);
        \draw[thick,->, shorten >=8, shorten <=8, >=stealth'](3,1)to(4,0); 
    \end{scope}
    \begin{scope}[shift={(11,0)}]
        \node at (0,2) {$\k$};
        \node at (0,0) {$\k$};
        \node at (1,1) {$\k^2$};
        \node at (2,1) {$\k^2$};
        \node at (3,1) {$\k^2$};
        \node at (4,2) {$\k$};
        \node at (4,0) {$\k$};

        \node at (0.5,2.5) {$\begin{bmatrix}
            1 \\
            1
        \end{bmatrix}$};
        \node at (0.5,-0.5) {$\begin{bmatrix}
            1 \\
            0
        \end{bmatrix}$};
        \node at (1.5,1.4) {id};
        \node at (2.5,1.4) {id};
        \node at (3,2.2) {$\begin{bmatrix}
            1 & 0
        \end{bmatrix}$};
        \node at (3,-0.2) {$\begin{bmatrix}
            1 & \lambda
        \end{bmatrix}$};
        
        \draw[thick,->, shorten >=8, shorten <=8, >=stealth'](0,2)to(1,1);
        \draw[thick,->, shorten >=8, shorten <=8, >=stealth'](0,0)to(1,1);
        \draw[thick,->, shorten >=8, shorten <=8, >=stealth'](1,1)to(2,1);
        \draw[thick,->, shorten >=8, shorten <=8, >=stealth'](2,1)to(3,1);
        \draw[thick,->, shorten >=8, shorten <=8, >=stealth'](3,1)to(4,2);
        \draw[thick,->, shorten >=8, shorten <=8, >=stealth'](3,1)to(4,0); 
    \end{scope}
    \end{tikzpicture}
    \end{center}
    The reader should verify that $\Hom(P(3), M_\lambda) \cong \Hom(M_\lambda, I(3)) \cong \k^2$ and $\Hom(M_{\lambda_1}, M_{\lambda_2}) = 0$ if $\lambda_1 \neq \lambda_2$ where $\lambda_i$ is considered to be a point in $\mathbb{P}^1$.
    There are also 3 special values of $\lambda$: $0,1,\infty$.
    Note that the 3 fixed non-identify linear maps in $M_\lambda$ determine 3 unique lines in $\k^2$: the images of the two maps on the left are the lines $x=0$ and $y=x$, and the kernel of the map in the upper right is the line $y=0$.
    These correspond to the special values $0,1,$ and $\infty$ (respectively) of $\lambda$.
    
    We also leave it as an exercise to show that $\tau^2 M_0 = M_0$, $\tau^2 M_1 = M_1$, $\tau^4 M_\infty = M_\infty$, and $\tau M_\lambda = M_\lambda$ for all other values of $\lambda \in \mathbb{P}^1$ and that $\tau^{-i} P(3) \neq 0$ and $\tau^i I(3) \neq 0$ for all $i \geq 0$.
    These examples can be extended to a general quiver of type $\widetilde{D}_n$.
    The only change that needs to be made in the general case is $\tau^{n-2} M_\infty = M_\infty$; the rest of the statements are the same.
    This is because, for a quiver of type $\widetilde{D}_n$, there are 3 exceptional stable tubes of rank 2, 2, and $(n-2)$ called $\mathcal{T}^0, \mathcal{T}^1,$ and $\mathcal{T}^\infty$ which contain the irreducible modules $M_0, M_1,$ and $M_\infty$ respectively, and an infinite number of homogeneous stable tubes (those of rank 1) $\mathcal{T}^\lambda$ which are uniquely determined by the indecomposable modules $M_\lambda$ for $\lambda \neq 0,1,\infty$.
    See \cite{simson_elements_2007-1} for more details.
\end{example}

Our goal in this paper is to present a geometric model that captures the $\tau$-, $\Hom$-, and $\Ext$-combinatorics of the modules over a path algebra of type $\widetilde{D}_n$ (i.e., those in the example above).
We would also like our geometric model to agree with the geometric models for the $\tau$-rigid modules; the $\tau$-rigid modules of type $\widetilde{D}_n$ are the preprojective, preinjective, and some of the regular modules.
Some of the regular modules that sit near the mouth of the stable tubes are $\tau$-rigid.
Therefore, in the next few sections, our goal is to give a punctured disk model for the modules over a path algebra of type $\widetilde{D}_n$.

%%%%%%%%%%%%%%%%%%%%%%%%%%%%%%%%%%%%%%%%%%%%%%%%%%%%%%%%%%%%%%%%%
\section{Main Construction and Tagged Rotation}
%%%%%%%%%%%%%%%%%%%%%%%%%%%%%%%%%%%%%%%%%%%%%%%%%%%%%%%%%%%%%%%%%

We now lay out the geometric construction and the second important category of Theorem~\ref{main-thm}: the category of tagged edges. 
The underlying geometric model for quivers and their mutations arising from triangulated surfaces is due to Fomin, Shapiro, and Thurston \cite{fomin_cluster_2008}.
Others have described a correspondence between module categories (or cluster categories) of types $A_n, D_n, \widetilde{A}_n$, and the $\tau$-rigid modules of type $\widetilde{D}_n$ and the category of distinguished arcs in some triangulated surfaces \cite{baur_geometric_2015,caldero_quivers_2006,he_mutation_2022, he_geometric_2023, schiffler_geometric_2008}.
However, a geometric model of the non-$\tau$-rigid modules of type $\widetilde{D}_n$ has remained elusive. 
These modules are often overlooked because they are not $\tau$-tilting objects, so tilting theory does not apply to them. 
We address this gap in the literature by extending the geometric model and its intersection-dimension formula to the non-$\tau$-rigid (regular) modules of type $\widetilde{D}_n$.

\begin{definition}\label{def-tagged}
    A \textbf{punctured, marked surface with boundary} as defined in \cite{fomin_cluster_2008} is a triple $\surf = (S, \MM, \PP)$ where $S$ is an oriented surface with boundary, $\PP \in S \setminus \partial S$ is the set of punctures, and $\MM \in \partial S$ is the set of marked points on the boundary of $S$.
    An \textbf{admissible tagged edge} or \textbf{curve} $(\gamma, \kappa)$ in a punctured, marked surface with boundary $\surf$ is a continuous map $\gamma: [0,1] \to \surf$ and a map $\kappa: \{ t \mid \gamma(t) \in \PP \} \to \{0,1\}$ such that 
    \begin{enumerate}
        \item $\gamma(0), \gamma(1) \in \PP \cup \MM$,
        \item $\gamma(t) \in \Delta^0 = \surf \setminus (\PP \cup \partial S) \text{ for } 0 < t < 1$,
        \item if $\gamma(0) \in \MM$ or $\gamma(1) \in \MM$, $\gamma$ or its self-intersection do not cut out a once-punctured monogon (Figure~\ref{fig:monogon}),
        \item $\gamma$ is not homotopic to a boundary segment of $\surf$, and
        \item $\gamma$ is not null-homotopic.
    \end{enumerate}
    Let $E^\times$ be the set of admissible tagged edges $(\gamma, \kappa)$. 
    If $\gamma$ has both endpoints in $\MM$, then the domain of $\kappa$ is $\emptyset$ and for convenience we write $\kappa = \emptyset$.
\end{definition}

\begin{figure}[h]
    \centering
    \begin{tikzpicture}
    % Red curve from left black dot on the line, counterclockwise around the dot in space (with arrow)
    \draw[red,thick,postaction={decorate,decoration={
        markings, mark=at position 0.33 with {\arrow{>}}}}] plot [smooth, tension=1] coordinates {(3,0) (3.5,1) (3,3) (2.5,1) (3,0)};
    
    % Red curve from right black dot on the line, counterclockwise around the dot in space and trailing off to northeast (with arrow)
    \draw[red,thick,postaction={decorate,decoration={
        markings, mark=at position 0.33 with {\arrow{>}}}}] plot [smooth, tension=1] coordinates {(6,0) (6.5,1) (6,3) (5.5,1) (7,1.7)};
        
    % Horizontal black line
    \draw[thick] (0,0) -- (9,0);
    
    % Black dots on the line (at 1/3 and 2/3 positions)
    \filldraw[black] (3,0) circle (2pt);
    \filldraw[black] (6,0) circle (2pt);
    
    % Black dots above each dot on the line (in space)
    \filldraw[black] (3,2) circle (2pt);
    \filldraw[black] (6,2) circle (2pt);
    
    \end{tikzpicture}
    \caption{Non-admissible curves cutting out once-punctured monogons.}
    \label{fig:monogon}
\end{figure}
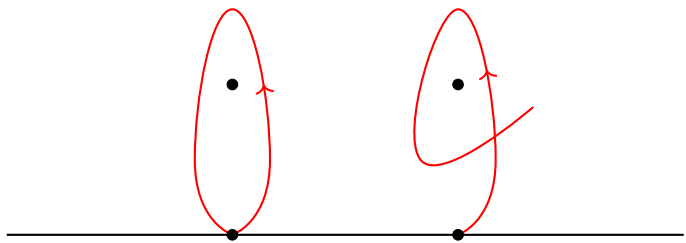

\begin{remark}
    For us, $\surf$ is always a disk, but this doesn't have to be true in general.
    For any admissible tagged edge with one endpoint in $\PP$ and the other in $\MM$, we will assume that $\gamma(0) \in \PP$.
    For a twice-punctured surface, label the two punctures $P_1$ and $P_2$.
    We will assume that all curves with both endpoints in distinct punctures have $\gamma(0) = P_1$ and $\gamma(1) = P_2$; this simplifies the exposition.
    Also, note that condition (3) in Definition~\ref{def-tagged} allows for curves to cut out once punctured monogons if both endpoints of $\gamma$ are in $\PP$.
\end{remark}

\begin{notations}
    Graphically, the map $\kappa$ is indicated by using a notch or bowtie. 
    If $\gamma(0) \in \PP$ and $\kappa(0) = 1$, then $\gamma$ will have a notch or bowtie drawn near $\gamma(0)$. 
    Otherwise, the edge will be plain near $\gamma(0)$.
    For example, see Figure~\ref{fig:punct-intersection}.
\end{notations}

The existence of edges ending in a puncture complicates the notion of intersection between two admissible tagged edges.

\begin{definition}\label{def-puctintersection}
    Two admissible tagged edges $(\gamma_1,\kappa_1), (\gamma_2,\kappa_2) \in E^\times$ are said to \textbf{intersect in a puncture} if $t_1, t_2 \in \{0,1\}$ and 
    \begin{enumerate}
        \item $\gamma_1(t_1) = \gamma_2(t_2) \in \PP$
        \item $\kappa_1(t_1) \neq \kappa_2(t_2)$
        \item If $\gamma_1$ and $\gamma_2$ are homotopic as untagged edges, then $\gamma_1(1 - t_1) = \gamma_2(1 - t_2) \in \PP$ and $\kappa_1(1-t_1) \neq \kappa_2(1-t_2)$.
    \end{enumerate}
\end{definition}

In other words, there are three cases that result in punctured intersections.
Two admissible tagged edges starting at the same puncture and ending at two different points in $\MM$ have 1 punctured intersection if they have different tags at the puncture. 
If two admissible tagged edges are homotopic as untagged edges and have different tagging at each end, they will have 2 punctured intersections, one for each endpoint in a puncture.
Finally, if two admissible tagged edges have both endpoints in punctures but are not homotopic as untagged edges, they will have a punctured intersection at each puncture where they have opposite tagging.
The left column of Figure~\ref{fig:punct-intersection} shows pairs of curves with no punctured intersections, and the right column shows a pair of curves with 1 punctured intersection (top right panel) and 2 punctured intersections (bottom right panel).

\begin{figure}[h]
    \centering
    \begin{tikzpicture}
        \begin{scope}[shift={(0,2)}] %top left subfigure
            \draw[red,thick, postaction={decorate,decoration={markings,mark=at position 0.8 with {\node[rotate=90, text=red, scale=1] at (0,0) {$\bowtie$};}}}] plot [smooth, tension=1] coordinates {(0,0) (2,0)};
            \draw[red,thick, postaction={decorate,decoration={markings,mark=at position 0.8 with {\node[rotate=90, text=red, scale=1] at (0,0) {$\bowtie$};}}}] plot [smooth, tension=1] coordinates {(4,0) (2,0)};
            \filldraw[black] (2,0) circle (3pt);
        \end{scope}

        \begin{scope}[shift={(5,2)}] %top right subfigure
            \draw[red,thick] plot [smooth, tension=1] coordinates {(0,0) (2,0)};
            \draw[red,thick, postaction={decorate,decoration={markings,mark=at position 0.8 with {\node[rotate=90, text=red, scale=1] at (0,0) {$\bowtie$};}}}] plot [smooth, tension=1] coordinates {(4,0) (2,0)};
            \filldraw[black] (2,0) circle (3pt);
        \end{scope}

        \begin{scope}[shift={(0,0)}] %bottom left subfigure
            \draw[red,thick, postaction={decorate,decoration={markings,mark=at position 0.9 with {\node[rotate=45, text=red, scale=1] {$\bowtie$};}}}] plot [smooth, tension=1] coordinates {(0,0) (2,1) (4,0)};
            \draw[red,thick] plot [smooth, tension=1] coordinates {(0,0) (2,-1) (4,0)};
            \filldraw[black] (0,0) circle (3pt);
            \filldraw[black] (4,0) circle (3pt);
        \end{scope}

        \begin{scope}[shift={(5,0)}] %bottom right subfigure
            \draw[red,thick, postaction={decorate,decoration={markings,mark=at position 0.9 with {\node[rotate=45, text=red, scale=1] {$\bowtie$};}}}] plot [smooth, tension=1] coordinates {(0,0) (2,1) (4,0)};
            \draw[red,thick, postaction={decorate,decoration={markings,mark=at position 0.9 with {\node[rotate=45, text=red, scale=1] {$\bowtie$};}}}] plot [smooth, tension=1] coordinates {(4,0) (2,-1) (0,0)};
            \filldraw[black] (0,0) circle (3pt);
            \filldraw[black] (4,0) circle (3pt);
        \end{scope}
    \end{tikzpicture}
    \caption{Left column: no punctured intersections; Right column: punctured intersections}
    \label{fig:punct-intersection}
\end{figure}
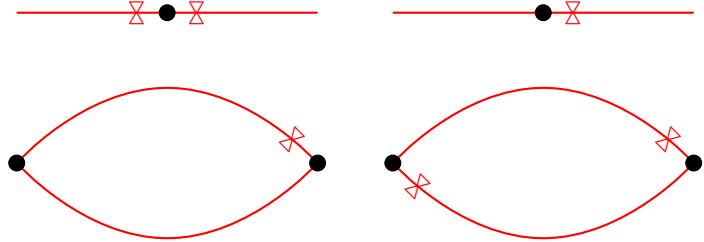

\subsection{Colored Admissible Tagged Edges}
There is a lot of subtlety that arises when studying the regular modules of a Euclidean path algebra. 
As we saw in Example~\ref{running-exa}, the regular modules of type $\widetilde{D}_n$ are partitioned into families of stable tubes $\mathcal{T}^\lambda$.
It is known that there are no nonzero homomorphisms between regular modules in distinct tubes.
However, the geometric model we have laid out so far will not be able to pick up on this subtle detail. 
To account for this, we add another decoration to our admissible tagged edges so that edges in distinct stable tubes will not intersect.
This additional decoration is a coloring on the edges.

\begin{definition}\label{def-coloring}
    Let $(\gamma,\kappa) \in E^\times$ be an admissible tagged edge. 
    A \textbf{coloring} of $(\gamma,\kappa)$ is a triple $(\gamma,\kappa,\lambda)$ where $\lambda \in \mathbb{P}^1(\k) \cup \{ -\infty \}$.
    If no coloring is specified, assume that the edge is colored with the distinguished color $-\infty$\footnote{For the sake of simplicity, assume that $-\infty < \lambda$ for all $\lambda\in\mathbb{P}^1$. Having a total order on $\mathbb{P}^1 \cup \{ -\infty \}$ will improve the exposition of the paper and allow us to color our edges using the parameter $\lambda$ from Example~\ref{running-exa}}.
    That is, $(\gamma,\kappa) = (\gamma,\kappa,-\infty)$ unless otherwise indicated.
    In this way, we can view all admissible tagged edges as colored, even if it is not explicitly stated.
    Let $E^\times_\lambda$ denote the set of colored admissible tagged edges $(\gamma,\kappa,\lambda)$.
\end{definition}

\begin{remark}
    Since the phrase ``colored admissible tagged edge'' is a bit of a mouthful, we will prefer to say ``admissible edge'' when there is no risk of confusion.
\end{remark}

We will end up coloring all the edges of a triangulation (defined later), the preprojective edges, and the preinjective edges (defined later) with the color $-\infty$, and each of the stable tubes of edges (defined later) will be assigned a distinct color $\lambda \in \mathbb{P}^1$.
Now, we can define the intersection of two admissible edges.

\begin{definition}
    Let $(\gamma_1,\kappa_1,\lambda_1), (\gamma_2,\kappa_2,\lambda_2) \in E^\times_\lambda$ be two admissible edges and $\Delta^0 = \surf \setminus (\PP \cup \partial S)$. Then, if $\lambda_1 = \lambda_2$ or $\text{min}(\lambda_1,\lambda_2) = -\infty$, the \textbf{intersection number} between these admissible edges is 
    $$\Int( (\gamma_1,\kappa_1,\lambda_1), (\gamma_2,\kappa_2,\lambda_2) ) \coloneqq
        \text{min}\{ \text{Card} (\gamma_1 \cap \gamma_2 \cap \Delta^0 ) \} + \text{Card}( \mathfrak{P}( (\gamma_1,\kappa_1), (\gamma_2,\kappa_2) ) )$$ 
    where $ \mathfrak{P}( (\gamma_1,\kappa_1), (\gamma_2,\kappa_2) )$ counts the number of punctured intersections, Card is the cardinality of a set, and all curves $\gamma_1, \gamma_2$ are considered up to homotopy relative to their endpoints.
    Otherwise, if $-\infty \neq \lambda_1 \neq \lambda_2 \neq -\infty$, we set $\Int( (\gamma_1,\kappa_1,\lambda_1), (\gamma_2,\kappa_2,\lambda_2) ) = 0$.
    Two admissible edges \textbf{cross} or \textbf{intersect} if $\Int( (\gamma_1,\kappa_1,\lambda_1), (\gamma_2,\kappa_2,\lambda_2) ) > 0$.
    Intersections that occur in $\Delta^0$ are called \textbf{normal intersections}.
    When counting the self-intersections of a curve, assume that we have two homotopic copies of the same curve that are only allowed to intersect transversely.
\end{definition}

Note that intersections are possible only if at least one of the edges is colored $-\infty$ or if both edges share the same color.

\begin{definition}\label{def-tri}
    Let $\surf$ be a punctured, marked surface with boundary. 
    A (maximal/ideal) \textbf{triangulation} $\TT$ of $\surf$ is a maximal collection of non-crossing admissible edges in $\surf$ (with color $-\infty$). 
    Note that ``non-crossing'' excludes admissible edges with self-crossings and punctured intersections from the triangulation.
\end{definition}

\begin{definition}\label{def-qt}
    Let $\surf$ be a punctured, marked surface with boundary and $\TT$ be a triangulation of $\surf$. 
    Then $Q^\TT$, the \textbf{quiver associated to} $\TT$, is the following quiver:
    \begin{itemize}
        \item The elements in $Q_0^\TT$ are in bijection with the admissible edges $(\gamma,\kappa, -\infty) \in \TT$
        \item There is an edge $i \to j$ in $Q_1^\TT$ if and only if 
        \begin{enumerate}
            \item the admissible edges representing $i$ and $j$ in $\TT$ share a common endpoint in $\alpha_0 \in \MM \cup \PP$
            \item $j$ is the direct counter-clockwise neighbor of $i$ at $\alpha_0$
        \end{enumerate}
    \end{itemize}
    The final step in forming $Q^\TT$ is to pairwise delete any 2-cycles; these will be generated by edges ending at a puncture.
    If $G$ is a finite or affine Dynkin diagram, then $\TT$ is \textbf{of type $G$} if $Q^\TT$ is of type $G$. 
    A triangulation $\TT$ is said to be \textbf{acyclic} if $Q^\TT$ is. 
    We only consider acyclic triangulations of type $\widetilde{D}_n$.
\end{definition}

\begin{remark}
    The quiver associated with a triangulation $\TT$ is finite if $\TT$ is finite.
    Moreover, mutation of the quiver at vertex $i$ can be realized geometrically by the so-called ``flip'' of the admissible edge $i$ (see \cite{fomin_cluster_2008}). 
    Figure~\ref{fig:example-triangulation} gives an example of non-acyclic triangulation of type $\widetilde{D}_n$.
    Note that the edges labeled 1 and 2 share an endpoint in a puncture, so there are arrows from $2 \to 1$ and $1 \to 2$ in $Q^\TT$; however, these are pairwise deleted.
\end{remark}

\begin{figure}[h]
    \centering
    \begin{tikzpicture}
    % First smaller TikZ picture
    \begin{scope}[shift={(0,0)}]

        % Arcs in the triangulation
        \draw[red,thick, postaction={decorate,decoration={markings,mark=at position 0.9 with {\node[rotate=30, text=red, scale=0.6] {$\bowtie$};}}}] plot [smooth, tension=1] coordinates {(2,3) (1,2) (1.3,1)};
        \node[text=red, scale=0.7] at (0.7, 1.5) {1};
        \draw[red,thick] plot [smooth, tension=1] coordinates {(2,3) (1.3,1)};
        \node[text=red, scale=0.7] at (1.3, 1.6) {2};
        \draw[red,thick] plot [smooth, tension=1] coordinates {(2,3) (1.5,0.5) (0,1)};
        \node[text=red, scale=0.7] at (1.3, 0.2) {3};
        \draw[red,thick] plot [smooth, tension=1] coordinates {(2,3) (2,-1)};
        \node[text=red, scale=0.7] at (1.8, 0) {4};
        \draw[red,thick] plot [smooth, tension=1] coordinates {(2,3) (2.5,0.5) (4,1)};
        \node[text=red, scale=0.7] at (2.7, 0.2) {5};
        \draw[red,thick] plot [smooth, tension=1] coordinates {(2,3) (2.7,1)};
        \node[text=red, scale=0.7] at (2.7,1.6) {6};
        \draw[red,thick, postaction={decorate,decoration={markings,mark=at position 0.9 with {\node[rotate=-30, text=red, scale=0.6] {$\bowtie$};}}}] plot [smooth, tension=1] coordinates {(2,3) (3,2) (2.7,1)};
        \node[text=red, scale=0.7] at (3.3,1.5) {7};
        
        % Black dots in the four corners
        \filldraw[black] (2,3) circle (2pt);
        \filldraw[black] (4,1) circle (2pt);
        \filldraw[black] (2,-1) circle (2pt);
        \filldraw[black] (0,1) circle (2pt);

        % Black dots along the horizontal midline (1/4 and 3/4 along the width)
        \filldraw[black] (1.3,1) circle (2pt);
        \filldraw[black] (2.7,1) circle (2pt);

        % Circle
        \draw[thick] (2,1) circle (2);
    \end{scope}
    
    % First smaller picture: Dynkin diagram of type affine D6
      \begin{scope}[shift={(6,0)}]
        \filldraw[black] (0,0) circle (2pt);
        \node[scale=0.7] at (0, 2.3) {1};
        \filldraw[black] (0,2) circle (2pt);
        \node[scale=0.7] at (0, -0.3) {2};
        \filldraw[black] (1,1) circle (2pt);
        \node[scale=0.7] at (1, 1.3) {3};
        \filldraw[black] (2,1) circle (2pt);
        \node[scale=0.7] at (2, 1.3) {4};
        \filldraw[black] (3,1) circle (2pt);
        \node[scale=0.7] at (3, 1.3) {5};
        \filldraw[black] (4,2) circle (2pt);
        \node[scale=0.7] at (4, 2.3) {6};
        \filldraw[black] (4,0) circle (2pt);
        \node[scale=0.7] at (4, -0.3) {7};
        
        \draw[thick,->, shorten >=5, shorten <=5, >=stealth'](0,2)to(1,1);
        \draw[thick,->, shorten >=5, shorten <=5, >=stealth'](0,0)to(1,1);
        \draw[thick,->, shorten >=5, shorten <=5, >=stealth'](1,1)to(2,1);
        \draw[thick,->, shorten >=5, shorten <=5, >=stealth'](2,1)to(3,1);
        \draw[thick,->, shorten >=5, shorten <=5, >=stealth'](3,1)to(4,2);
        \draw[thick,->, shorten >=5, shorten <=5, >=stealth'](3,1)to(4,0);
      \end{scope}
    \end{tikzpicture}
    \caption{A triangulation of type $\widetilde{D}_6$ with along with $Q^\TT$.}
    \label{fig:example-triangulation}
\end{figure}
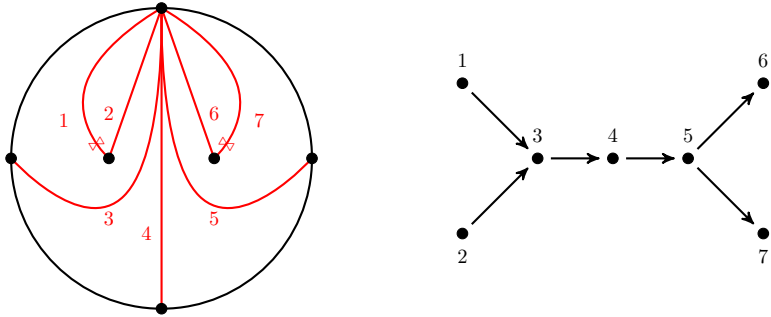

We now define the one-end shift operation [1] and the poliwhirl operation $\vartheta$.
These are the combinatorial moves that are in bijection with the ``reverse'' irreducible morphisms in the Auslander-Reiten quiver.

\begin{definition}\label{def-rotation}
    For a curve $\gamma$ in $\surf$ with $\gamma(0) \in \MM$ (resp. $\gamma(1) \in \MM$), let $\gamma[1]$ (resp. $[1]\gamma$) be the curve obtained from $\gamma$ by moving $\gamma(0)$ (resp. $\gamma(1)$) along the boundary counterclockwise to the next marked point. 
    If $\gamma(0) \in \PP$ and $\gamma(1) \in \MM$, then $\gamma[1] = \gamma$ and $[1]\gamma$ is the curve obtained from $\gamma$ by moving $\gamma(1)$ along the boundary counterclockwise to the next marked point.
\end{definition}

Figure~\ref{fig:endshift} gives an example of the $[1]$ operator acting on an admissible edge with at least one endpoint in the boundary.

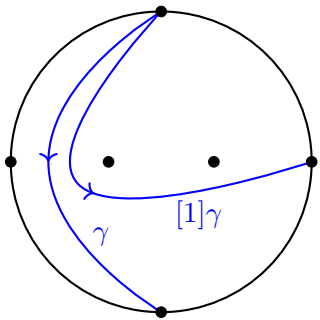
\begin{figure}[h]
    \begin{tikzpicture}
        % Arcs in the triangulation
        \draw[blue,thick, postaction={decorate,decoration={markings,mark=at position 0.5 with {\arrow{>}}}}] plot [smooth, tension=1] coordinates {(2,3) (0.5,1) (2,-1)};
        \node[blue,scale=1] at (1.2, 0) {$\gamma$};
        \draw[blue,thick, postaction={decorate,decoration={markings,mark=at position 0.5 with {\arrow{>}}}}] plot [smooth, tension=1] coordinates {(2,3) (0.9,0.7) (4,1)};
        \node[blue,scale=1] at (2.5, 0.3) {$[1]\gamma$};
        
        % Black dots in the four corners
        \filldraw[black] (2,3) circle (2pt);
        \filldraw[black] (4,1) circle (2pt);
        \filldraw[black] (2,-1) circle (2pt);
        \filldraw[black] (0,1) circle (2pt);

        % Black dots along the horizontal midline (1/4 and 3/4 along the width)
        \filldraw[black] (1.3,1) circle (2pt);
        \filldraw[black] (2.7,1) circle (2pt);

        % Circle
        \draw[thick] (2,1) circle (2);
    \end{tikzpicture}
    \caption{An example of a one-end shift.}
    \label{fig:endshift}
\end{figure}

\begin{definition}\label{def-poliwhirl}
    Let $\gamma_0$ be the (untagged) straight line connecting $P_1$ and $P_2$.
    If $\gamma(0),\gamma(1) \in \PP$, and $\gamma$ without tagging is not homotopic to $\gamma_0$, then $\gamma\vartheta$ is the \textbf{right poliwhirl} of $\gamma$ obtained in the following manner:
    \begin{enumerate}
        \item homotope $\gamma(1)$ along $\gamma_0$ to the other puncture.
    \end{enumerate}
    If $\gamma(0),\gamma(1) \in \PP$, then $\vartheta\gamma$ is the \textbf{left poliwhirl} of $\gamma$ obtained in the following manner:
    \begin{enumerate}
        \item let $\mathbf{C}$ be the closed loop of radius $\varepsilon$ around $\gamma(1) \in \PP$
        \item homotope $\gamma(1)$ along $\gamma_0$ toward the other puncture until you reach the intersection of $\mathbf{C}$ and $\gamma_0$
        \item homotope $\gamma(1)$ along $\mathbf{C}$ \textit{once} clockwise until you are back at the intersection of $\mathbf{C}$ and $\gamma_0$
        \item continue the homotopy of step (2) along $\gamma_0$ to the other puncture.
    \end{enumerate}
\end{definition}

\begin{remark}
    Figure~\ref{fig:poliwhirl} gives an example of the left poliwhirl, $\gamma_0$, and the curve $\mathbf{C}$.
    Even though it is not obvious at first glance, $\vartheta\gamma\vartheta$ is homotopic to $\gamma$.
    The easy way to distinguish these operations is that the left poliwhirl $\vartheta\gamma$ has a clockwise (left-handed) movement, and $\vartheta$ is written on the left.
    The poliwhirl operation is named after the Pok\'emon of the same name, which has a spiral on its chest.
    When drawing examples of these moves, the right poliwhirl looks like unwinding a spool of thread by one rotation, and the left poliwhirl looks like adding another rotation to a spool of thread, hence the name.
\end{remark}

The next definition is of great importance to the remainder of the paper. 
It is the geometric analog of the Auslander-Reiten translation $\tau$.

\begin{definition}\label{def-rho}
    Let $(\gamma, \kappa, \lambda)$ be an admissible edge in $\surf$. 
    The \textbf{tagged rotation} of $(\gamma,\kappa, \lambda)$ is $\rho(\gamma,\kappa, \lambda) = (\rho(\gamma),\kappa', \lambda)$ where 
    $$\rho(\gamma) = \begin{dcases} [1]\gamma[1] & \text{ if } \gamma(0), \gamma(1) \in \MM \\
    [1]\gamma & \text{ if } \gamma(0) \in \PP \text{ and } \gamma(1) \in \MM \\
    \gamma & \text{ if } \gamma(0), \gamma(1) \in \PP
    \end{dcases}$$ 
    and $$\kappa(t)' =\begin{dcases}
        \kappa(t) & \text{if } \lambda \in \{0, 1, \infty\} \\
        1 - \kappa(t) & \text{otherwise}.
    \end{dcases}$$ In other words, the tagging map remains unchanged in the tubes of rank 1.
\end{definition}

\begin{figure}[h]
    \begin{tikzpicture}
        % Arcs in the triangulation
        \draw[blue,thick, postaction={decorate,decoration={markings,mark=at position 0.25 with {\arrow{>}}}}] plot [smooth, tension=1] coordinates {(0,1) (2,2.25) (5,1) (2,-0.25) (0,1)};
        \node[blue,scale=1] at (2, -0.6) {$\vartheta\gamma$};
        \draw[blue,thick, postaction={decorate,decoration={markings,mark=at position 0.5 with {\arrow{>}}}}] plot [smooth, tension=1] coordinates {(0,1) (2,0.7) (4,1)};
        \node[blue,scale=1] at (2.5, 0.3) {$\gamma$};
        \draw[magenta,thick,dotted] plot [smooth, tension=1] coordinates {(4,1) (0,1)};
        \node[magenta,scale=1] at (2, 1.3) {$\gamma_0$};
        \draw[magenta,thick,dotted] (4,1) circle (0.5);
        \node[magenta,scale=1] at (3.3, 1.5) {$\mathbf{C}$};
        
        % Black dots in the four corners
        \filldraw[black] (4,1) circle (2pt);
        \filldraw[black] (0,1) circle (2pt);

    \end{tikzpicture}
    \caption{The poliwhirl in action.}
    \label{fig:poliwhirl}
\end{figure}
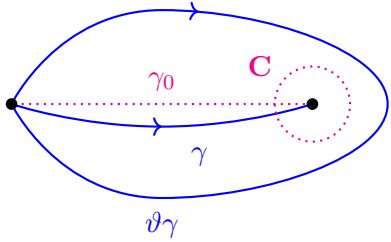

\begin{remark}
    Note that $\rho$ preserves adjacency between admissible edges and the coloring of edges.
    Specifically, if $(\gamma_1,\kappa_1,\lambda_1)$ and $(\gamma_2,\kappa_2,\lambda_2)$ share any endpoints, so do $\rho(\gamma_1,\kappa_1, \lambda_1)$ and $\rho(\gamma_2,\kappa_2, \lambda_2)$.
    Any intersections (including self-intersections) are invariant under $\rho$.
    As a consequence, if $\TT$ is an acyclic triangulation with quiver $Q^\TT$, then $\rho\TT$ is also an acyclic triangulation and $Q^{\rho\TT} = Q^\TT$.
    When we apply $n$ tagged rotations to a curve $(\gamma, \kappa, \lambda)$, we write $\rho^n(\gamma, \kappa, \lambda)$, so $\rho^0(\gamma, \kappa, \lambda) = (\gamma, \kappa, \lambda)$.
    Also, we don't mess with the tagging if the color of the admissible edge is an element of $\mathbb{P}^1$ that is not $0, 1, \infty$; this is just to ensure $\rho(\gamma, \kappa, \lambda)$ is homotopic to $(\gamma, \kappa, \lambda)$ for curves in stable tubes of rank 1.
\end{remark}

The requirement that no admissible edges cut out a once-punctured monogon is now an issue that must be considered in light of the $[1]$ operator.
In order to address this, He, Zhou, and Zhu \cite{he_geometric_2023} introduce the notion of the completion of a curve, which is a formalization of the process described in Fomin, Shapiro, Thurston \cite{fomin_cluster_2008}.

\begin{definition}\label{def-completion}
    If $\gamma(0) \in \PP$ and $\gamma(1) \in \MM$, then $\overline\gamma$, the \textbf{completion of $\gamma$}, is the curve which cuts out a once-punctured monogon and, together with $\gamma$, creates a self-folded triangle.
    Otherwise, $\overline\gamma = \gamma$.
    Note that there are two tagged edges ending in the puncture with the same completion; one for each map $\kappa$. 
    To avoid ambiguity, assume that all completions are oriented so that $\gamma(0) \in \PP$ lies to the left of $\overline\gamma$ (see Figure~\ref{fig:completion}).
\end{definition}

\begin{figure}[h]
    \begin{tikzpicture}
        % Arcs in the triangulation
        \draw[blue,thick, postaction={decorate,decoration={markings,mark=at position 0.5 with {\arrow{>}}}}] plot [smooth, tension=1] coordinates {(2,1) (2,-1)};
        \node[blue,scale=1] at (1.8, 0.7) {$\gamma$};
        \draw[blue,thick, postaction={decorate,decoration={markings,mark=at position 0.5 with {\arrow{>}}}}] plot [smooth, tension=1] coordinates {(2,-1) (3,0.7) (2,1.5) (1,0.7) (2,-1)};
        \node[blue,scale=1] at (2, 2) {$\overline{\gamma}$};
        
        % Black dots in the four corners
        \filldraw[black] (2,3) circle (2pt);
        \filldraw[black] (4,1) circle (2pt);
        \filldraw[black] (2,-1) circle (2pt);
        \filldraw[black] (0,1) circle (2pt);

        % Black dots along the horizontal midline (1/4 and 3/4 along the width)
        \filldraw[black] (2,1) circle (2pt);

        % Circle
        \draw[thick] (2,1) circle (2);
    \end{tikzpicture}
    \caption{The completion of a tagged edge.}
    \label{fig:completion}
\end{figure}
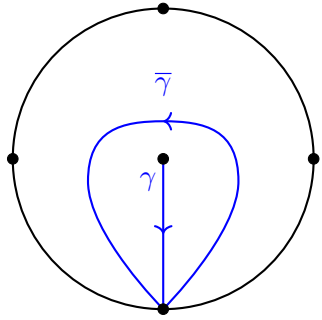

%%%%%%%%%%%%%%%%%%%%%%%%%%%%%%%%%%%%%%%%%%%%%%%%%%%%%%%%%%%%%%%%%
\section{The Category and Quiver of Admissible Edges}
%%%%%%%%%%%%%%%%%%%%%%%%%%%%%%%%%%%%%%%%%%%%%%%%%%%%%%%%%%%%%%%%%

This is the first of the two categories that Theorem~\ref{main-thm} is concerned with.
In \cite{fomin_cluster_2008}, the authors give a precise count for the number of tagged edges in a punctured, marked surface with boundary $\surf$.
The following theorem is due to Proposition~2.10 of their paper.

\begin{theorem}[\cite{fomin_cluster_2008}] \label{thm-nplus3}
    All triangulations $\TT$ of a twice-punctured $(n-2)$-gon contain $n+1$ admissible edges and have a quiver $Q^\TT$ of type $\widetilde{D}_{n}$.
\end{theorem}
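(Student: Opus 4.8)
The plan is to verify the statement in two parts: first count the admissible edges in a twice-punctured $(n-2)$-gon, and then identify the quiver type.

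\medskip

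\noindent\textbf{Counting the edges.} First I would use an Euler-characteristic / combinatorial argument for the number of arcs in a triangulation of a surface, as established in \cite{fomin_cluster_2008}. For a surface of genus $g$ with $b$ boundary components, $p$ punctures, and $c$ marked points on the boundary, the number of arcs in any (tagged) triangulation is $6g - 6 + 3b + 3p + c$ (adjusting for the low-complexity cases). Here $g=0$, $b=1$, $p=2$, and $c = n-2$, so the count is $-6 + 3 + 6 + (n-2) = n+1$. Alternatively, and perhaps more transparently for the reader, I would argue directly: triangulate the $(n-2)$-gon with the two punctures by first drawing a single arc that separates the two punctures into two once-punctured polygons, then recall that a triangulated once-punctured $m$-gon has a fixed number of arcs (which one computes by noting that a triangulated once-punctured $m$-gon has $m$ internal triangles, hence $m$ interior arcs by a boundary-counting argument); summing over the two pieces plus the separating arc, and being careful about how the boundary marked points are partitioned, yields $n+1$. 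I would present whichever of these two derivations is cleanest, citing \cite{fomin_cluster_2008} for the general formula and giving the direct count as a sanity check.

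\medskip

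\noindent\textbf{Identifying the quiver type.} For the second claim, I would exhibit one explicit triangulation $\TT$ of the twice-punctured $(n-2)$-gon whose associated quiver $Q^\TT$ (as in Definition~\ref{def-qt}) is an acyclic orientation of the affine Dynkin diagram $\widetilde{D}_n$: take the ``zig-zag'' or ``fan'' triangulation across the middle of the polygon together with the four arcs near the two punctures (two tagged and two plain edges at each puncture meeting the two ``branch'' vertices of $\widetilde{D}_n$), so that the two punctures produce the two forks at each end of the $\widetilde{D}_n$ diagram and the interior arcs produce the linear $A_{n-3}$-type spine. One then reads off $Q^\TT$ using the counter-clockwise-neighbor rule and the pairwise deletion of the $2$-cycles coming from the pairs of arcs sharing a puncture endpoint, and checks it is the expected shape with $n+1$ vertices (consistent with the first part). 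Finally, since any two triangulations of the same surface are connected by a sequence of flips \cite{fomin_cluster_2008}, and flips realize quiver mutations (the Remark after Definition~\ref{def-qt}), every triangulation $\TT$ of the twice-punctured $(n-2)$-gon has $Q^\TT$ mutation-equivalent to this model quiver, hence of type $\widetilde{D}_n$ by definition.

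\medskip

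\noindent\textbf{Main obstacle.} The bookkeeping around the punctures is where the care is needed: tagged edges at a puncture, the pairs of arcs that generate $2$-cycles to be deleted, and the self-folded triangles all have to be handled so that the edge count and the quiver shape come out exactly right rather than off by a small constant. In particular one must make sure the ``each puncture contributes a fork'' picture is compatible with the tagging conventions fixed in Definitions~\ref{def-tagged}--\ref{def-completion}, and that flips involving arcs incident to a puncture still correspond to mutations (so that the connectedness-under-flips argument transfers cleanly to the tagged setting). Modulo this, both halves reduce to the standard surface combinatorics of \cite{fomin_cluster_2008} plus one explicit example.
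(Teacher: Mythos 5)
The paper does not give its own proof of this statement: it attributes it directly to Fomin--Shapiro--Thurston, citing Proposition~2.10 of \cite{fomin_cluster_2008}. Your proposal correctly reconstructs the argument underlying that citation: the Euler-characteristic formula $6g - 6 + 3b + 3p + c$ evaluated at $(g,b,p,c) = (0,1,2,n-2)$ yields $n+1$ arcs (and your alternative direct count, cutting the surface into two once-punctured polygons of $a+2$ and $b+2$ marked points with $a+b=n-4$, also gives $(a+2)+(b+2)+1 = n+1$), while the mutation class is pinned down by exhibiting one model triangulation with a $\widetilde{D}_n$-shaped $Q^\TT$ and invoking flip-connectedness of tagged triangulations together with the flip--mutation compatibility noted after Definition~\ref{def-qt}. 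The cautions you raise about tagging conventions, $2$-cycle deletion at punctures, and self-folded triangles are precisely the bookkeeping that the tagged-arc framework of \cite{fomin_cluster_2008} is designed to handle, so the argument is sound as outlined.
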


We focus on triangulations of a twice-punctured disk with $(n-2)$ marked points ($n \geq 4$).
We will continue to assume that $\TT$ is an acyclic triangulation.

\begin{definition}
    Fix a triangulation $\TT$ of $\surf$.
    The \textbf{projective edges} in $E^\times_\lambda$ are defined to be 
    $$PTE(\TT) \coloneqq \{ (\gamma, \kappa, -\infty) \in \surf \ | \ \rho^1(\gamma, \kappa, -\infty) \in \TT \} \subset E^\times_\lambda.$$
    The \textbf{preprojective edges} in $E^\times_\lambda$ are defined to be
    $$\mathcal{P}(\TT) \coloneqq \{ (\gamma, \kappa, -\infty) \in \surf \ | \ \rho^i(\gamma, \kappa, -\infty) \in \TT \text{ for some } i \geq 1 \} \subset E^\times_\lambda.$$
    The \textbf{injective edges} in $E^\times_\lambda$ are defined to be 
    $$ITE(\TT) \coloneqq \{ (\gamma, \kappa, -\infty) \in \surf \ | \ \rho^{-1}(\gamma, \kappa, -\infty) \in \TT \} \subset E^\times_\lambda.$$
    The \textbf{preinjective edges} in $E^\times_\lambda$ are defined to be
    $$\mathcal{Q}(\TT) \coloneqq \{ (\gamma, \kappa, -\infty) \in \surf \ | \ \rho^{-i}(\gamma, \kappa, -\infty) \in \TT \text{ for some } i \geq 1 \} \subset E^\times_\lambda.$$
    The \textbf{regular edges} in $E^\times_\lambda$ are defined to be
    $$\mathcal{R}(\TT) \coloneqq \{ (\gamma, \kappa, \lambda) \in \surf \ | \ \rho^{i}(\gamma, \kappa, \lambda) = (\gamma, \kappa, \lambda) \text{ for some } i \geq 1 \} \subset E^\times_\lambda.$$
\end{definition}

\begin{remark}
    By Definition~\ref{def-tri}, all edges in a triangulation have color $-\infty$; this motivates the coloring of the preprojective and preinjective edges in the above definition.
\end{remark}

Since $\mathcal{P}(\TT)$ is dual to $\mathcal{Q}(\TT)$, it will be enough to prove our theorems for preprojective and regular edges only.
Also, in type $\widetilde{D}_n$, regular edges will have an orbit of size $(n-2)$, $2$, or $1$.
The edges in $\mathcal{R}(\TT)$ with an orbit of size $2$ and $1$ are the edges with both endpoints in the punctures, and the edges in $\mathcal{R}(\TT)$ with an orbit of size $(n-2)$ are the admissible edges that cut out a twice-punctured disk.
This aligns with what we observed in Example~\ref{running-exa}.

%%%%%%%%%%%%%%%%%%%%%%%%%%%%%%%%%%%%%%
\subsection{Elementary Moves}
%%%%%%%%%%%%%%%%%%%%%%%%%%%%%%%%%%%%%%

\begin{figure}[h]
    \centering
    \begin{tikzpicture}
    \begin{scope}[shift={(-5,0)}] % Top Left
        % Arcs in the triangulation
        \draw[blue,thick] plot [smooth, tension=1] coordinates {(0,-2) (-1,0.7) (-2,0)};
        \node at (-0.58,0) {
          \tikz[rotate=-15,scale=1]{
            \draw[blue,->,thick] (0,0) -- (-0.1,0.1);
          }
        };
        \node[blue,scale=1] at (-0.7, 1) {$[1]\gamma$};
        \draw[blue,thick, postaction={decorate,decoration={markings,mark=at position 0.5 with {\arrow{>}}}}] plot coordinates {(0,-2) (0,2)};
        \node[blue,scale=1] at (0.25, -1) {$\gamma$};
        \draw[blue,thick, postaction={decorate,decoration={markings,mark=at position 0.5 with {\arrow{>}}}}] plot [smooth, tension=1] coordinates {(2,0) (1,-0.7) (0,2)};
        \node[blue,scale=1] at (0.7, 1) {$\gamma[1]$};
        
        % Black dots in the four corners
        \filldraw[black] (0,2) circle (2pt);
        \filldraw[black] (2,0) circle (2pt);
        \filldraw[black] (0,-2) circle (2pt);
        \filldraw[black] (-2,0) circle (2pt);

        % Black dots along the horizontal midline (1/4 and 3/4 along the width)
        \filldraw[black] (1,0) circle (2pt);
        \filldraw[black] (-1,0) circle (2pt);

        % Circle
        \draw[thick] (0,0) circle (2);
    \end{scope}
    \begin{scope}[shift={(0,0)}] % Top Middle
        % Arcs in the triangulation
        \draw[blue,thick] plot [smooth, tension=1] coordinates {(-1,0) (-0.2,-1) (0,-2)};
        \node[rotate=45, text=blue, scale=0.8] at (-0.8,-0.2) {$\bowtie$};
        \node at (-0.4,-0.675) {
          \tikz[rotate=-195,scale=1]{
            \draw[blue,->,thick] (0,0) -- (-0.1,0.1);
          }
        };
        \draw[blue,thick, postaction={decorate,decoration={markings,mark=at position 0.5 with {\arrow{>}}}}] plot [smooth, tension=1] coordinates {(-1,0) (-0.9,-1) (0,-2)};
        \node[blue,scale=1] at (-0.6, -1) {$\eta$};
        \draw[olive,thick, postaction={decorate,decoration={markings,mark=at position 0.5 with {\arrow{>}}}}] plot [smooth, tension=1] coordinates {(0,-2) (-0.2,0) (-1.1,0.4) (-1.5,-0.6) (0,-2)};
        \node[olive,scale=1] at (-1.4, -0.2) {$\overline{\eta}$};
        \draw[blue,thick] plot [smooth, tension=1] coordinates {(0,-2) (0,1) (-2,0)};
        \node at (0.18,0.7) {
          \tikz[rotate=-20,scale=1]{
            \draw[blue,->,thick] (0,0) -- (-0.1,0.1);
          }
        };
        \node[blue,scale=1] at (-0.3, 0.7) {$\gamma$};
        \draw[blue,thick, postaction={decorate,decoration={markings,mark=at position 0.5 with {\arrow{>}}}}] plot [smooth, tension=1] coordinates {(2,0) (0,1.5) (-2,0)};
        \node[blue,scale=1] at (1, 0.5) {$\gamma[1]$};
        
        % Black dots in the four corners
        \filldraw[black] (0,2) circle (2pt);
        \filldraw[black] (2,0) circle (2pt);
        \filldraw[black] (0,-2) circle (2pt);
        \filldraw[black] (-2,0) circle (2pt);

        % Black dots along the horizontal midline (1/4 and 3/4 along the width)
        \filldraw[black] (1,0) circle (2pt);
        \filldraw[black] (-1,0) circle (2pt);

        % Circle
        \draw[thick] (0,0) circle (2);
    \end{scope}
    \begin{scope}[shift={(5,0)}] % Top right
        % Arcs in the triangulation
        \draw[blue,thick, postaction={decorate,decoration={markings,mark=at position 0.5 with {\arrow{>}}}}] plot coordinates {(-1,0) (-2,0)};
        \node[blue,scale=1] at (-1.5, -0.3) {$\gamma$};
        \draw[blue,thick] plot [smooth, tension=1] coordinates {(0,-2) (0,0.7) (-2,0)};
        \node at (0.16,0.45) {
          \tikz[rotate=-20,scale=1]{
            \draw[blue,->,thick] (0,0) -- (-0.1,0.1);
          }
        };
        \node[blue,scale=1] at (0.6, 0.7) {$\overline{\gamma}[1]$};
        
        % Black dots in the four corners
        \filldraw[black] (0,2) circle (2pt);
        \filldraw[black] (2,0) circle (2pt);
        \filldraw[black] (0,-2) circle (2pt);
        \filldraw[black] (-2,0) circle (2pt);

        % Black dots along the horizontal midline (1/4 and 3/4 along the width)
        \filldraw[black] (1,0) circle (2pt);
        \filldraw[black] (-1,0) circle (2pt);

        % Circle
        \draw[thick] (0,0) circle (2);
    \end{scope}

    \begin{scope}[shift={(-5,-5)}] % Bottom Left
        % Arcs in the triangulation
        \draw[blue,thick, postaction={decorate,decoration={markings,mark=at position 0.75 with {\arrow{>}}},decoration={markings,mark=at position 0.15 with {\node[rotate=120, text=blue, scale=0.8] at (0,-0.25) {$\bowtie$};}}}] plot [smooth, tension=1] coordinates {(-1,0) (0,0.6) (1.5,0) (0,-0.6) (-1,0)};
        \node[blue,scale=1] at (-0.3, -0.8) {$\gamma$};
        \draw[blue,thick, postaction={decorate,decoration={markings,mark=at position 0.75 with {\arrow{>}}},decoration={markings,mark=at position 0.2 with {\node[rotate=90, text=blue, scale=0.8] at (0,0) {$\bowtie$};}}}] plot [smooth, tension=1] coordinates {(-1,0) (1,0)};
        \node[blue,scale=1] at (0.2, 0.25) {$\gamma\vartheta$};
        \draw[blue,thick, postaction={decorate,decoration={markings,mark=at position 0.5 with {\arrow{>}}},decoration={markings,mark=at position 0.98 with {\node[rotate=30, text=blue, scale=0.8] at (0,0) {$\bowtie$};}}}] plot [smooth, tension=1] coordinates {(-1,0) (0.5,1.6) (1.75,0) (0,-1.6) (-1.75,0) (-0.5,1.6) (1,0)};
        \node[blue,scale=1] at (0.1, -1.3) {$\vartheta\gamma$};
        
        % Black dots in the four corners
        \filldraw[black] (0,2) circle (2pt);
        \filldraw[black] (2,0) circle (2pt);
        \filldraw[black] (0,-2) circle (2pt);
        \filldraw[black] (-2,0) circle (2pt);

        % Black dots along the horizontal midline (1/4 and 3/4 along the width)
        \filldraw[black] (1,0) circle (2pt);
        \filldraw[black] (-1,0) circle (2pt);

        % Circle
        \draw[thick] (0,0) circle (2);
    \end{scope}
    \begin{scope}[shift={(0,-5)}] % Bottom Middle
        % Arcs in the triangulation
        \draw[blue,thick, postaction={decorate,decoration={markings,mark=at position 0.5 with {\arrow{>}}}}] plot [smooth, tension=1] coordinates {(0,-2) (-1.75,0) (0,2)};
        \node[blue,scale=1] at (0, 1.5) {$[1]\gamma$};
        \draw[blue,thick, postaction={decorate,decoration={markings,mark=at position 0.5 with {\arrow{>}}}}] plot [smooth, tension=1] coordinates {(0,-2) (-1.5,0) (0,1) (2,0)};
        \node[blue,scale=1] at (1, 1) {$\gamma$};
        \draw[blue,thick, postaction={decorate,decoration={markings,mark=at position 0.5 with {\arrow{>}}}}] plot [smooth, tension=1] coordinates {(0,-2) (-1.3,-0.2) (0,0.7) (1.3,-0.2) (0,-2)};
        \node[blue,scale=1] at (0, 0.3) {$\gamma[1]$};
        
        % Black dots in the four corners
        \filldraw[black] (0,2) circle (2pt);
        \filldraw[black] (2,0) circle (2pt);
        \filldraw[black] (0,-2) circle (2pt);
        \filldraw[black] (-2,0) circle (2pt);

        % Black dots along the horizontal midline (1/4 and 3/4 along the width)
        \filldraw[black] (1,0) circle (2pt);
        \filldraw[black] (-1,0) circle (2pt);

        % Circle
        \draw[thick] (0,0) circle (2);
    \end{scope}
    \begin{scope}[shift={(5,-5)}] % Bottom right
        % Arcs in the triangulation
        \draw[blue,thick, postaction={decorate,decoration={markings,mark=at position 0.75 with {\arrow{>}}},decoration={markings,mark=at position 0.85 with {\node[rotate=60, text=blue, scale=0.8] at (0,0) {$\bowtie$};}}}] plot [smooth, tension=1] coordinates {(-1,0) (0,0.6) (1.2,0) (0,-0.6) (-1,0)};
        \node[blue,scale=1] at (0, -0.3) {$\gamma$};
        \draw[blue,thick, postaction={decorate,decoration={markings,mark=at position 0.88 with {\arrow{>}}},decoration={markings,mark=at position 0.97 with {\node[rotate=20, text=blue, scale=0.8] at (0,0) {$\bowtie$};}}}] plot [smooth, tension=1] coordinates {(-1,0) (0,1) (1.5,0) (0,-1) (-1.5,0) (0,1.4) (1.7,0) (0,-1.4) (-1,0)};
        \node[blue,scale=1] at (0, -1.7) {$\vartheta^2\gamma$};
        
        % Black dots in the four corners
        \filldraw[black] (0,2) circle (2pt);
        \filldraw[black] (2,0) circle (2pt);
        \filldraw[black] (0,-2) circle (2pt);
        \filldraw[black] (-2,0) circle (2pt);

        % Black dots along the horizontal midline (1/4 and 3/4 along the width)
        \filldraw[black] (1,0) circle (2pt);
        \filldraw[black] (-1,0) circle (2pt);

        % Circle
        \draw[thick] (0,0) circle (2);
    \end{scope}
    \end{tikzpicture}
    \caption{The six classes of elementary moves left-to-right, top-to-bottom.}
    \label{fig:elementary}
\end{figure}
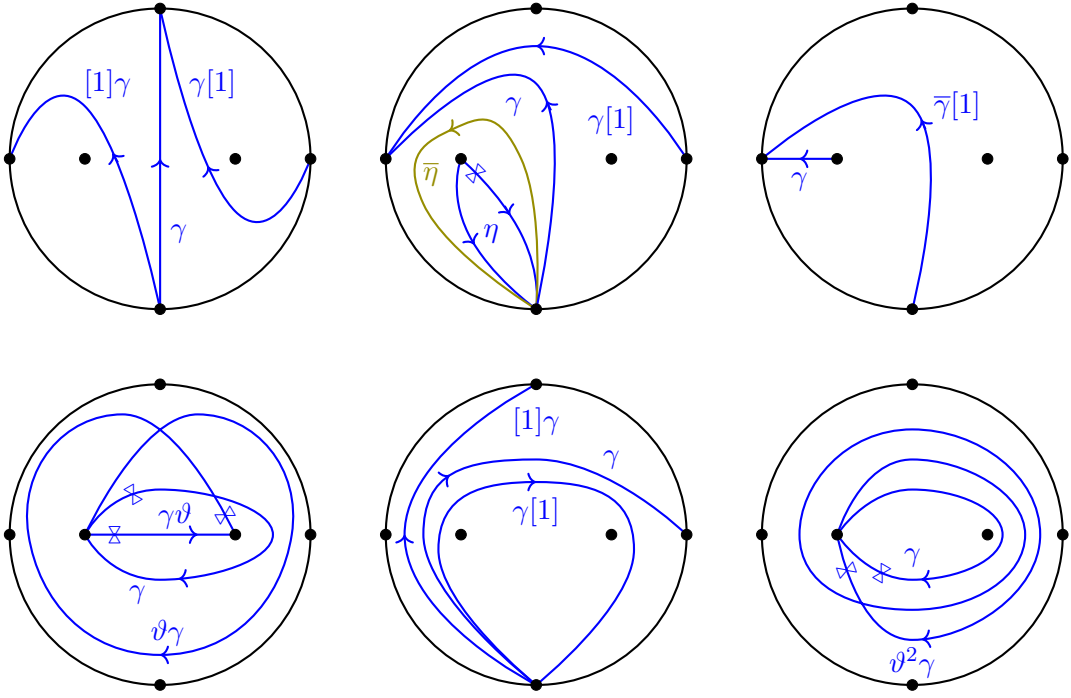

An elementary move sends an admissible edge $( \gamma, \kappa, \lambda )$ to another admissible edge $(\eta, \kappa', \lambda)$.
Conceptually, these are all the possible intermediate steps between $(\gamma, \kappa, \lambda)$ and $\rho(\gamma, \kappa, \lambda)$.
The irreducible morphisms in $\mod \k Q$ are the categorification of the inverses of elementary moves. 
Ultimately, the $(\eta, \kappa', \lambda)$ will form the geometric analog of the middle terms in an almost split sequence ending at $( \gamma, \kappa, \lambda)$.

\begin{definition}
    Let $\surf$ be a twice-punctured $(n-2)$-gon, $\TT$ be an acyclic triangulation of $\surf$, $\gamma_0$ be as above, and $(\gamma,\kappa,\lambda) \in E^\times_\lambda \setminus PTE(\TT)$ be a non-projective admissible edge. 
    An \textbf{elementary move} is a map $E^\times_\lambda \setminus PTE(\TT) \to E^\times_\lambda$ which falls into one of six cases:
    \begin{enumerate}
        \item If $\gamma$ has both ends in $\MM$, $\gamma$ separates $\surf$ into two once-punctured polygons (i.e., $\gamma$ intersects $\gamma_0$ transversely), and $\gamma$ and the boundary components of $\surf$ \textbf{do not} form a once-punctured digon, then there are exactly two elementary moves $( \gamma, \emptyset, -\infty ) \mapsto ( \gamma[1], \emptyset, -\infty )$ and $( \gamma, \emptyset, -\infty ) \mapsto ( [1]\gamma, \emptyset, -\infty ).$
        \item If $\gamma$ has both ends in $\MM$, $\gamma$ separates $\surf$ into two once-punctured polygons (i.e., $\gamma$ intersects $\gamma_0$ transversely), and $\gamma$ and a boundary component of $\surf$ \textbf{do} form a once-punctured digon, then there are exactly three elementary moves. 
        Without loss of generality, assume that $\gamma$ is oriented so that the puncture inside the digon lies to the left of $\gamma$. 
        In this case, $[1]\gamma$ is homotopic to the completion $\overline{\eta}$ of two tagged edges $(\eta,\kappa)$ and $(\eta,\kappa')$. 
        The elementary moves are therefore $( \gamma, \emptyset, -\infty ) \mapsto ( \gamma[1], \emptyset, -\infty )$, $( \gamma, \emptyset, -\infty ) \mapsto ( \eta, \kappa, -\infty)$, and $( \gamma, \emptyset, -\infty ) \mapsto ( \eta, 1-\kappa, -\infty).$
        \item If $\gamma$ satisfies $\gamma(0) \in \PP$ and $\gamma(1) \in \MM$, then there is exactly one elementary move. Let $\overline{\gamma}$ be the completion of $\gamma$. The elementary move is $( \gamma, \kappa, -\infty ) \mapsto ( \overline{\gamma}[1], \emptyset, -\infty ).$ 
        \item If $\gamma$ satisfies $\gamma(0), \gamma(1) \in \PP$ and $\rho^2(\gamma) = \gamma$, then there are exactly two elementary moves $( \gamma, \kappa, \lambda ) \mapsto ( \vartheta\gamma, 1-\kappa, \lambda )$ and $( \gamma, \kappa, \lambda ) \mapsto ( \gamma\vartheta, \kappa, \lambda ).$\footnote{The last elementary move is only allowed if $\gamma\vartheta$ is not nullhomotopic.}
        In this case, $\lambda \in \{ 0,1\}.$
        \item If $\gamma$ has both ends in $\MM$ and $\gamma$ does not intersects $\gamma_0$, then there are exactly two elementary moves $( \gamma, \emptyset, \infty ) \mapsto ( \gamma[1], \emptyset, \infty )$ and $( \gamma, \emptyset, \infty ) \mapsto ( [1]\gamma, \emptyset, \infty ).$\footnote{The last elementary move is only allowed if $[1]\gamma$ is not homotopic to a boundary component.}
        \item If $\gamma$ satisfies $\gamma(0), \gamma(1) \in \PP$ and $\rho^1(\gamma) = \gamma$, then there are exactly two elementary moves $( \gamma, \kappa, \lambda ) \mapsto ( \vartheta^2\gamma, \kappa, \lambda )$ and $( \gamma, \kappa, \lambda ) \mapsto ( \gamma\vartheta^2, \kappa, \lambda ).$\footnote{The last elementary move is only allowed if $(\Int((\gamma,\kappa, \lambda), i ))_{i \in \TT} \neq (1,1,2,2,\ldots,2,1,1) = \delta.$}
    \end{enumerate}
\end{definition}

\begin{remark}
    The elementary moves described above can be thought of as possible ``middle steps'' between a curve $(\gamma,\kappa, \lambda)$ and its tagged rotation $\rho(\gamma,\kappa, \lambda)$.
    Moves (1), (2), and (3) are applicable only to preprojective and preinjective edges, hence the coloring of $-\infty$.
    Move (4) is applicable to admissible edges in stable tubes of rank $2$, hence the coloring of $0$ or $1$.
    Move (5) is applicable to admissible edges in the stable tube of rank $(n-2)$, hence the coloring of $\infty$.
    Move (6) is applicable to admissible edges in stable tubes of rank $1$.
\end{remark}

%%%%%%%%%%%%%%%%%%%%%%%%%%%%%%%%%%%%%%
\subsection{The Category of Admissible Edges}\label{sec-edge-category}
%%%%%%%%%%%%%%%%%%%%%%%%%%%%%%%%%%%%%%
Now that $E^\times_\lambda$ has been described and we have the notion of an elementary move, we can define the second category that plays a major role in Theorem~\ref{main-thm}.

\begin{definition}
    A \textbf{mesh relation} is an equality between certain sequences of elementary moves.
    To be precise, for $(\gamma, \kappa,\lambda) \in E^\times_\lambda$, the mesh relation is \[m_{(\gamma,\kappa,\lambda)} = \sum_\alpha (\alpha)\alpha\]
    where the sum is over all elementary moves $\alpha: (\gamma,\kappa,\lambda) \mapsto (\lambda_i, \kappa_i,\lambda)$ and $(\alpha)$ is the elementary move $(\alpha): (\lambda_i, \kappa_i,\lambda) \mapsto \rho(\gamma,\kappa,\lambda)$ which is guaranteed to exist for $(\gamma, \kappa,\lambda) \notin PTE(\TT)$.
\end{definition}
\begin{definition}
    The \textbf{category of admissible edges} $\mathcal{E(\TT)}$ is the $\k$-linear additive category with objects being direct sums of endpoint-relative homotopy classes of admissible edges.
    Since this is an additive category, we need only define morphisms between admissible edges.
    The space of morphisms from $(\gamma_1, \kappa_1, \lambda_1)$ to $(\gamma_2, \kappa_2, \lambda_2)$ is the quotient of the vector space over $\k$ spanned by sequences of elementary moves from $(\gamma_1, \kappa_1, \lambda_1) $ to $ (\gamma_2, \kappa_2, \lambda_2)$ by the subspace generated by mesh relations.
\end{definition}

We now define the quiver of admissible edges. 

\begin{definition}
    The \textbf{quiver of admissible edges} $\Gamma(\mathcal{E(\TT)})$ is the quiver with vertices the endpoint-relative homotopy classes of admissible edges in $\mathcal{E(\TT)}$ and arrows $(\gamma_2, \kappa_2, \lambda_2) \to (\gamma_1, \kappa_1, \lambda_1)$ whenever there is an elementary move $(\gamma_1, \kappa_1, \lambda_1) \mapsto (\gamma_2, \kappa_2, \lambda_2)$.
\end{definition}

\begin{remark}
    It is crucial to note that the arrows in $\Gamma(\mathcal{E(\TT)})$ go in the \textit{opposite} direction of the elementary moves.
    Also, note that the quiver of admissible edges will have 3 components: the preprojective component $\mathcal{P(\TT)}$ with a definite ``beginning'' starting with the projective tagged edges, the preinjective component $\mathcal{Q(\TT)}$ with a definite ``end'' consisting of the injective tagged edges, and the regular component $\mathcal{R(\TT)}$ which itself is made of infinitely many stable tubes.
    A pictorial representation of the Auslander-Reiten quiver of the module category can be found in Figure~\ref{fig:modules category}, and the same picture works for describing the quiver of admissible edges.
\end{remark}

It shouldn't be much of a surprise that $\rho$ can be thought of as a right-to-left translation in $\Gamma(\mathcal{E(\TT)})$. 
The following lemma makes clear the structure of $\mathcal{R}(\TT)$.

\begin{lemma}
    $\mathcal{R(\TT)}$ is made up of connected components called stable tubes.
    The tubes are made of admissible edges that have a finite orbit under $\rho$.
    There are two stable tubes of rank 2, one of rank $(n-2)$, and infinitely many of rank 1.
    The stable tubes of rank 2 are composed of admissible edges with both endpoints in the punctures, i.e., $\gamma(0), \gamma(1) \in \PP$. 
    The difference between the edges in each of the tubes of rank 2 is the tagging (and coloring); in one stable tube of rank 2, we have $\kappa(0) = \kappa(1)$ for all admissible edges, and in the other we have $\kappa(0) \neq \kappa(1)$.
    The edges in the tube of rank $(n-2)$ cut out a twice-punctured polygon, and the rank of this tube is the same as the number of marked points in $\MM$.
    Finally, there are infinitely many stable tubes of rank 1 that are made up of edges with both endpoints in the punctures and who are stable under $\rho$.
\end{lemma}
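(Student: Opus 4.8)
The plan is to classify the regular edges --- the colored tagged edges $(\gamma,\kappa,\lambda)$ with a finite $\rho$-orbit --- by the locations of the endpoints of $\gamma$ and by the color $\lambda$, and then to read the tube structure directly off Definition~\ref{def-rho} and the list of elementary moves. As preliminaries I would record that $\rho$ preserves homotopy classes, colors, adjacency, and (self-)intersection numbers (the remark after Definition~\ref{def-rho}), so that $\mathcal{R}(\TT)$ is precisely the union of the finite $\rho$-orbits; that every elementary move out of a regular edge lands again on a regular edge of the same color, while no move of type (1), (2), or (3) can target one (moves (4), (5), (6) keep puncture--puncture edges among puncture--puncture edges and $\gamma_{0}$-disjoint boundary--boundary edges among themselves); and hence that $\mathcal{R}(\TT)$ is a union of connected components of $\mathcal{E}(\TT)$, inside which only the mesh shapes coming from moves (4)--(6) occur, so each component has the $\mathbb{Z}A_\infty/\langle\rho^{r}\rangle$ shape of a stable tube of some rank $r$.

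The case analysis then runs as follows, using the formula for $\rho$ in Definition~\ref{def-rho}. (i) If $\gamma(0),\gamma(1)\in\MM$ and $\gamma$ meets $\gamma_{0}$ (equivalently, $P_{1}$ and $P_{2}$ lie on opposite sides of $\gamma$), then $\rho^{k}(\gamma)=[1]^{k}\gamma[1]^{k}$, and returning both endpoints to their original positions requires sweeping the edge once across both punctures, which is not a trivial homotopy; so the orbit is infinite and such edges are not regular (these are the preprojective and preinjective edges, with moves (1)--(2)). The same sweeping argument applies to edges with exactly one endpoint in $\PP$, where $\rho(\gamma)=[1]\gamma$, so these are not regular either; move (3) attaches them to the $\gamma_{0}$-crossing boundary--boundary edges. (ii) If $\gamma(0),\gamma(1)\in\MM$ and $\gamma$ is disjoint from $\gamma_{0}$, then both punctures lie on one side, so $\gamma$ cuts out a twice-punctured polygon, and $\rho^{n-2}(\gamma)=[1]^{n-2}\gamma[1]^{n-2}=\gamma$ (each endpoint has gone once around the $n-2$ marked points and the unpunctured side is simply connected) while no smaller power fixes $\gamma$ (the cyclic positions of the two endpoints are shifted). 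The moves of type (5) keep one inside this class, so all such edges lie in one tube, whose mouth is the set of the $n-2$ minimal such edges (those cutting out a twice-punctured triangle); its rank is therefore $n-2=|\MM|$. (iii) If $\gamma(0),\gamma(1)\in\PP$, then $\rho$ fixes the underlying curve, so the orbit is automatically finite: for $\lambda\in\{0,1\}$ the tagged rotation toggles the tagging at both punctures, so $\rho\neq\mathrm{id}$ but $\rho^{2}=\mathrm{id}$, giving rank $2$; for $\lambda\in\mathbb{P}^{1}\setminus\{0,1,\infty\}$ it leaves the tagging fixed (the normalization in the remark after Definition~\ref{def-rho}), so $\rho=\mathrm{id}$, giving rank $1$.

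It remains to count the tubes. A rank-$2$ tube has at its mouth a single toggle-orbit $\{(\gamma_{0},\kappa),(\gamma_{0},1-\kappa)\}$ of the straight arc $\gamma_{0}$ between $P_{1}$ and $P_{2}$, and there are exactly two such orbits: $\{\text{plain--plain},\ \text{notched--notched}\}$, along which $\kappa(0)=\kappa(1)$ at every edge, and $\{\text{plain--notched},\ \text{notched--plain}\}$, along which $\kappa(0)\neq\kappa(1)$ at every edge; coloring these $0$ and $1$ gives exactly two rank-$2$ tubes, distinguished exactly as in the statement. For the rank-$1$ tubes, distinct colors yield distinct tubes (Definition~\ref{def-coloring}), each consists of $\rho$-fixed puncture--puncture edges of that color, and $\mathbb{P}^{1}\setminus\{0,1,\infty\}$ is infinite, so there are infinitely many; together with the single rank-$(n-2)$ tube from (ii) and the fact from (i) that nothing else is regular, this accounts for all of $\mathcal{R}(\TT)$.

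I expect step (ii) to be the main obstacle: showing that all $\gamma_{0}$-disjoint boundary--boundary edges are tied together by elementary moves into a single tube of rank \emph{exactly} $n-2$ needs the homotopy/winding bookkeeping to exclude a smaller period together with a precise description of the mouth, and the compatibility of the two rank-$2$ toggle-orbits with going deeper in the tube (via the poliwhirl moves (4)) also has to be checked. One shortcut would be to invoke the equivalence $\mathcal{E}(\TT)\simeq\mod\k Q$ from Section~6, under which $\rho$ corresponds to $\tau$ and tubes to tubes, and then transport the known shape of $\mathcal{R}(\k Q)$ recalled in Example~\ref{running-exa}; but since this lemma precedes that equivalence, the argument above should be carried out geometrically.
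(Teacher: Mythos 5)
The paper does not prove this lemma: no argument follows the statement, and the tube structure is then used (via the known shape of $\mathcal{R}(\k Q^\TT)$ recalled in Example~\ref{running-exa}) to set up the equivalence of Section~6. So there is no paper proof to compare against; your blind argument supplies what the paper leaves implicit, and the purely geometric route you choose is the one the lemma's placement requires, as you yourself observe (invoking the Section~6 equivalence here would be circular). Your strategy---stratify admissible edges by endpoint location and color, read the $\rho$-period off Definition~\ref{def-rho}, and let moves (4)--(6) organize each stratum---is the right one, and you correctly read the remark after Definition~\ref{def-rho} as overriding the literal (reversed) condition in the displayed rule for $\kappa'$.

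That said, three steps are asserted rather than argued, and they are the real content of the lemma. First, the $\ZZ A_\infty/\langle\rho^r\rangle$ shape of each regular component requires checking that non-mouth edges have exactly two outgoing and two incoming elementary moves, that mouth edges have exactly one of each (this is what the footnotes on moves (4)--(6) arrange), and that the resulting translation quiver is a tube and not a disjoint union of tubes. Second, the infinite-orbit claim in case (i) rests on a sweeping picture; to make it precise one can, for instance, show that $\Int(\rho^{k}(\gamma,\kappa,-\infty),i)$ is eventually strictly increasing for some fixed $i\in\TT$, or track a winding number of $\rho^{k}\gamma$ around the punctures. Third, and most seriously, ``the moves of type (5) keep one inside this class, so all such edges lie in one tube'' is a non-sequitur: closure under the moves does not give connectedness, and you still owe a proof that the $\gamma_0$-disjoint boundary--boundary arcs form a single move-connected family. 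The period-$(n-2)$ claim also quietly uses that the unpunctured side of $\gamma$ is a disk, so that $\rho^{n-2}\gamma$ and $\gamma$, which have the same endpoints and bound on the same side of $\gamma_0$, are actually isotopic; this should be said, since the endpoint-shift observation only excludes periods $<n-2$ once $\rho^{n-2}=\mathrm{id}$ has been established on homotopy classes. You flag the first and third gaps yourself, which is honest, but they are not side issues.
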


%%%%%%%%%%%%%%%%%%%%%%%%%%%%%%%%%%%%%%%%%%%%%%%%%%%%%%%%%%%%%%%%%
\section{An Equivalence of Quivers}
%%%%%%%%%%%%%%%%%%%%%%%%%%%%%%%%%%%%%%%%%%%%%%%%%%%%%%%%%%%%%%%%%

\begin{definition}\label{def-phi}
    Let $\TT$ be an acyclic triangulation of a twice-punctured disk with $(n-2)$ marked boundary points corresponding to an acyclic quiver $Q^\TT$ of type $\widetilde{D}_n$.
    Define an isomorphism of quivers $\varphi: \Gamma(\mod\k Q^\TT) \to \Gamma(\mathcal{E}(\TT))$ in the following way:
    \begin{itemize}
        \item $\varphi$ sends projective indecomposable modules at vertex to projective admissible edges and injective indecomposable modules to injective admissible edges (with respect to the labeling).
        \item If $[M]$ is an isomorphism class of indecomposable modules at the mouth of the stable tube $\mathcal{T}^\lambda$, then $[M]$ is uniquely determined by it's dimension vector and the number $\lambda$.
        Define $\varphi([M]) = (\gamma, \kappa, \lambda)$ where $\udim [M] = (\Int((\gamma,\kappa, \lambda), i ))_{i \in \TT}$ and $(\gamma, \kappa, \lambda)$ also sits at the mouth of a stable tube.
        \item We define $\varphi$ on the remaining vertices of the Auslander-Reiten quiver recursively: if there is an arrow $[M] \to [N]$ in $\Gamma(\mod\k Q^\TT)$, an arrow $\varphi([M]) \to (\gamma, \kappa, \lambda)$ in $\Gamma(\mathcal{E}(\TT))$, and $\udim [N] = (\Int((\gamma,\kappa, \lambda), i ))_{i \in \TT}$, then define $\varphi([N]) = (\gamma,\kappa, \lambda).$ 
        Dually, if there is an arrow $[M] \to [N]$ in $\Gamma(\mod\k Q^\TT)$, an arrow $(\gamma, \kappa, \lambda) \to \varphi([N])$ in $\Gamma(\mathcal{E}(\TT))$, and $\udim [M] = (\Int((\gamma,\kappa, \lambda), i ))_{i \in \TT}$, then define $\varphi([M]) = (\gamma,\kappa, \lambda).$ 
    \end{itemize}
\end{definition}

It should be clear from the constructions in the previous sections and the definition that $\varphi$ is an isomorphism of quivers. 
The recursive definition of $\varphi$ ensures that arrows are mapped to arrows, and since each connected component of $\Gamma(\mod\k Q^\TT)$ and $\Gamma(\mathcal{E}(\TT))$ is uniquely determined by either the projective, injective, or mouth vertices, Definition~\ref{def-phi} must define an isomorphism of quivers. 
Moreover, this isomorphism has the following property: $\varphi \circ \tau = \rho \circ \varphi$.

\section{The Intersection-Dimension Formula}
%%%%%%%%%%%%%%%%%%%%%%%%%%%%%%%%%%%%%%%%%%%%%%%%%%%%%%%%%%%%%%%%%

\begin{lemma}
    Let $(\gamma_1, \kappa_1, -\infty) \in \mathcal{P}(\TT)$ and $(\gamma_2, \kappa_2, \lambda) \in E^\times_\lambda$ where $\TT$ is an acyclic triangulation of a twice-punctured $(n-2)$-gon $\surf$ and let $M(\gamma_1, \kappa_1, -\infty) = M_1 \in \mathcal{P}(\k Q^\TT)$ and $M(\gamma_2, \kappa_2, \lambda_2) = M_2 \in \Gamma(\mod \k Q^\TT)$ be the corresponding modules under the equivalence $\varphi^{-1}$.
    Then $$\Int( (\gamma_1,\kappa_1, -\infty), (\gamma_2,\kappa_2, \lambda) ) = \dim_\k \Ext^1 (M_1, M_2) + \dim_\k \Ext^1 (M_2, M_1).$$
\end{lemma}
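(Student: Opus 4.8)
\section*{Proof proposal}

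The plan is to use the $\rho$-invariance of intersection numbers together with Corollary~\ref{ext-tau-invariant} to reduce to the single base case in which $M_1$ is an indecomposable projective $P(j)$, and then to read both sides of the identity directly off the equivalence of categories.

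First I would carry out the reduction. Write $M_1 = \tau^{-i}P(j)$ for some $i \geq 0$; then $\tau^{k}M_1$ is non-projective for $0 \leq k \leq i-1$, since otherwise $P(j)$ would be a proper $\tau$-power of a projective and hence zero. If $M_2$ is regular or preinjective, then no $\tau$-power of $M_2$ is ever projective, so iterating Corollary~\ref{ext-tau-invariant} yields $\Ext^1(M_1,M_2) \cong \Ext^1(P(j),\tau^{i}M_2)$ and $\Ext^1(M_2,M_1) \cong \Ext^1(\tau^{i}M_2,P(j))$; applying $\rho^{i}$ to the two curves, using that $\rho$ preserves all intersection numbers (the remark after Definition~\ref{def-rho}) and that $\varphi^{-1}\circ\rho = \tau\circ\varphi^{-1}$ on non-projective modules, replaces $(\gamma_1,\gamma_2)$ by a pair whose first module is $P(j)$. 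If instead $M_2 = \tau^{-m}P(l)$ is preprojective, I would apply only $\rho^{\min(i,m)}$: after this move one of the two modules has become projective, and since both sides of the claimed identity are symmetric in $M_1$ and $M_2$ we may relabel so that this projective module is the first one. In every case this reduces us to $M_1 = P(j)$ projective and $M_2$ arbitrary.

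For the base case $M_1 = P(j)$ we have $\Ext^1(M_1,M_2) = 0$, so it remains to match $\Int$ with $\dim_\k\Ext^1(M_2,P(j))$. Let $\TT_j = \rho(\gamma_1)$ be the edge of $\TT$ obtained from the projective curve $\gamma_1$. If $M_2$ is projective the claim is immediate: then $\gamma_1$ and $\gamma_2$ both lie in the non-crossing set $\rho^{-1}(\TT)$, so the left side is $0$, while $\Ext^1$ vanishes on both sides. Otherwise $\rho(\gamma_2)$ is an admissible edge whose corresponding module is $\tau M_2$, and
\[
\Int\!\big((\gamma_1,\kappa_1,-\infty),(\gamma_2,\kappa_2,\lambda)\big) = \Int\!\big(\TT_j,\rho(\gamma_2)\big) = \big(\udim \tau M_2\big)_j = \dim_\k\Hom(P(j),\tau M_2) = \dim_\k\Ext^1(M_2,M_1),
\]
using, in order: $\rho$-invariance of intersections; Corollary~\ref{cor-int-mod} applied to $\rho(\gamma_2)$ (its $j$-th intersection number with $\TT$ is the $j$-th entry of its dimension vector); the standard isomorphism $\Hom(P(j),N)\cong N_j$; and Corollary~\ref{ext-hom}. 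Adding $\dim_\k\Ext^1(M_1,M_2) = 0$ completes the argument.

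The only real obstacle is the bookkeeping in the reduction: one must keep the exponent of $\rho$ (equivalently of $\tau$) small enough that no module in sight is projective when Corollary~\ref{ext-tau-invariant} is invoked, which is precisely why the preprojective subcase for $M_2$ forces us to stop at $\rho^{\min(i,m)}$ and invoke the symmetry of the statement rather than pushing $M_1$ all the way to a projective. Once one is in the base case $M_1 = P(j)$ the computation is purely formal, the geometric content having already been absorbed into the equivalence of categories and Corollary~\ref{cor-int-mod}.
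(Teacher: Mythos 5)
Your proof is correct and follows the same strategy as the paper's: apply $\rho$ and $\tau$ (via the $\rho$-invariance of intersection numbers and Corollary~\ref{ext-tau-invariant}) to slide $M_1$ back to a projective $P(j)$, read off $\Int$ from the dimension vector via Corollary~\ref{cor-int-mod}, identify this with $\dim_\k \Hom(P(j),\tau M_2)$, and conclude by the Auslander--Reiten formula. Your version is somewhat more careful than the paper's in spelling out the bookkeeping when $M_2$ is preprojective (stopping at $\rho^{\min(i,m)}$ and invoking symmetry), a case the paper dispatches with a brief ``WLOG $M_1$ is to the left of $M_2$'' and the caveat $\tau^{n+1}M_2\neq 0$.
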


\begin{proof}
    By Theorem~\ref{thm-0-homs}, we know that any nonzero homomorphism between $M_1$ and $M_2$ must begin in the preprojective component.
    If $M_2 \in \mathcal{P}(\k Q^\TT)$ as well, assume without loss of generality that $M_1$ is to the left of $M_2$ in the Auslander-Reiten quiver.
    First note that, for an indecomposable modules $M$, we have \[\Hom (P(j), M) \cong M_j\] where $M_j$ is the vector space at vertex $j$ in the indecomposable quiver representation $M$.
    Since $\Int( -, - )$, $\Hom(-,-)$, and $\Ext^1 (-, -)$ are invariant under $\tau$ and $\rho$ for non-projective modules over a path algebra of type $\widetilde{D}_n$, we can apply these translations to both objects until $(\gamma_1, \kappa_1, -\infty) \in \TT$.
    Say $\rho^{n+1} (\gamma_1,\kappa_1, -\infty) = j \in \TT$ so that $\tau^{n} M_1 = P(j)$.
    Then, if $e_j$ is the unit vector with a $1$ in the $j^{th}$ position and zeros elsewhere and $\tau^{n+1} M_2 \neq 0$,
    \begin{align*}
        \Int( (\gamma_1,\kappa_1, -\infty), (\gamma_2,\kappa_2, \lambda) ) &= \Int( \rho^{n+1}(\gamma_1,\kappa_1, -\infty), \rho^{n+1}(\gamma_2,\kappa_2, \lambda) ) \\
        &= \Int( j, \rho^{n+1}(\gamma_2,\kappa_2, \lambda) ) \\
        &= \udim (\tau^{n+1} M_2) \cdot e_j = (\tau^{n+1} M_2)_j \\
        &= \dim_\k \Hom (P(j), \tau^{n+1} M_2) \\
        &= \dim_\k \Hom (\tau^{-n}P(j), \tau^{-n}\tau^{n+1} M_2) \\
        &= \dim_\k \Hom (M_1, \tau M_2) \\
        &= \dim_\k \Ext^1 (M_2, M_1)
    \end{align*}
    where the last inequality follows from the Auslander-Reiten formulas.
    Moreover, by our assumption that $M_1$ is to the left of $M_2$, we have that $\Ext^1 (M_1, M_2) = 0$.
\end{proof}

\begin{lemma}
    Let $(\gamma_1, \kappa_1, \lambda_1), (\gamma_2, \kappa_2, \lambda_2) \in \mathcal{R}(\TT)$ where $\TT$ is an acyclic triangulation of a twice-punctured $(n-2)$-gon $\surf$ and let $M(\gamma_1, \kappa_1, \lambda_1) = M_1, M(\gamma_2, \kappa_2, \lambda_2) = M_2 \in \mathcal{R}(\k Q^\TT)$ be the corresponding modules under the equivalence $\varphi^{-1}$.
    Then $$\Int( (\gamma_1,\kappa_1, \lambda_1), (\gamma_2,\kappa_2, \lambda_2) ) = \dim_\k \Ext^1 (M_1, M_2) + \dim_\k \Ext^1 (M_2, M_1).$$
\end{lemma}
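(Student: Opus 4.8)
The plan is to reduce everything to the structure of the stable tubes and then to a direct computation in a single tube, using the coloring to separate tubes. First I would dispose of the case where $\lambda_1 \neq \lambda_2$ (and neither is $-\infty$, which cannot happen since regular edges carry a color in $\mathbb{P}^1$): on the geometric side $\Int = 0$ by definition, and on the module side $M_1$ and $M_2$ lie in distinct stable tubes, so by the well-known fact that there are no nonzero homomorphisms between regular modules in different tubes, together with the Auslander–Reiten formulas (Corollary~\ref{ext-hom}), we get $\dim_\k \Ext^1(M_1,M_2) = \dim_\k \Hom(M_2, \tau M_1) = 0$ and symmetrically $\Ext^1(M_2,M_1) = 0$. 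So assume $\lambda_1 = \lambda_2 = \lambda$, i.e.\ both edges sit in the same stable tube $\mathcal{T}^\lambda$ of rank $r$ (with $r \in \{2, n-2, 1\}$, or $r$ any positive integer in the homogeneous case).

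The next step is to use $\tau$-/$\rho$-invariance, exactly as in the preprojective–regular lemma. By Corollary~\ref{ext-tau-invariant}, $\Ext^1(\tau^i M_1, \tau^i M_2) \cong \Ext^1(M_1, M_2)$ for all $i$, and by the Remark following Definition~\ref{def-rho} the intersection number is invariant under simultaneous tagged rotation. Since the tube $\mathcal{T}^\lambda$ has finite rank $r$, after applying $\rho^i$ to both edges for a suitable $i$ we may assume that $(\gamma_1, \kappa_1, \lambda)$ sits at the mouth of the tube. Thus it suffices to verify the formula when $M_1$ is the (quasi-simple) module at the mouth of $\mathcal{T}^\lambda$ and $M_2$ is an arbitrary module in the same tube. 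Using Corollary~\ref{ext-hom} we translate the right-hand side: $\dim_\k \Ext^1(M_1, M_2) + \dim_\k \Ext^1(M_2, M_1) = \dim_\k \Hom(M_2, \tau M_1) + \dim_\k \Hom(M_1, \tau M_2)$, and both terms can be read off from the standard combinatorics of homomorphisms in a stable tube (which are completely determined by quasi-length and quasi-socle/quasi-top data). The left-hand side must then be computed by counting normal intersections in $\Delta^0$ plus punctured intersections, using Corollary~\ref{cor-int-mod} — which tells us that $\Int((\gamma,\kappa,\lambda), i)_{i\in\TT}$ is the dimension vector of $M(\gamma,\kappa,\lambda)$ — together with an explicit understanding of how the edges in $\mathcal{R}(\TT)$ wind around the two punctures.

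I would organize the computation by tube rank. For the two rank-$2$ tubes ($\lambda \in \{0,1\}$): the mouth module corresponds to a tagging/coloring of $\gamma_0$, and the other modules in the tube are obtained by the poliwhirl moves of case (4); one checks that the intersection number between $\gamma_0$ and a curve wound $k$ times around a puncture is governed by $k$ and the tagging parity, and this matches the $\Hom$-dimension between quasi-simple and quasi-length-$k$ modules in a rank-$2$ tube. For the rank-$(n-2)$ tube ($\lambda = \infty$): the mouth consists of the boundary-to-boundary curves not meeting $\gamma_0$, and the regular modules in this tube have dimension vectors with entries bounded by those of $\delta = (1,1,2,\ldots,2,1,1)$; the intersection count here is closest to the (already established) affine-type-$A$ picture of Baur–Torkildsen, restricted to the annular region cut out by such a curve. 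For the homogeneous tubes (rank $1$, $\lambda \neq 0,1,\infty$): here $\rho$ fixes the edge but the tagging is not changed (by the convention in Definition~\ref{def-rho}), the tube is ``linear'' in the sense that $\tau$ acts trivially, and case (6) elementary moves involve $\vartheta^2$; the self-intersections and mutual intersections must be counted with the footnote condition involving $\delta$ in mind, and these should match the $\Ext$-dimensions in a rank-$1$ tube (where $\Ext^1(M,N)$ for $M$, $N$ of quasi-lengths $a \le b$ has dimension $a$).

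The main obstacle I expect is the bookkeeping of punctured intersections and the extra winding in the rank-$2$ and homogeneous tubes: unlike the preprojective case, these curves can have both endpoints in punctures, so the intersection number genuinely includes the $\mathfrak{P}(-,-)$ term of Definition~\ref{def-puctintersection} as well as self-intersections counted via two transverse homotopic copies. Getting the parities right — in particular checking that the tagging flip built into $\rho$ on rank-$2$ tubes (case where $\kappa' = 1-\kappa$) produces exactly one extra punctured intersection per half-period, matching $\dim\Ext^1$ jumping by $1$ as quasi-length increases — is the delicate point. I would handle this by fixing, once and for all, normal forms for the curves in each tube (e.g.\ $\vartheta^k\gamma_0$ with a chosen tagging) and tabulating $\Int$ against $\TT$ and against each other in a small number of explicit cases, then invoking $\rho$-invariance to conclude in general; the remaining identity on the module side is then a direct consequence of the well-documented $\Hom$-combinatorics of tubes over a tame hereditary algebra together with Corollary~\ref{ext-hom}.
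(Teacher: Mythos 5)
Your proposal follows the same overall strategy as the paper: split on whether the colors agree, observe that both sides vanish when they differ, and when they agree invoke the well-understood $\Hom$/$\Ext$-combinatorics of a stable tube over a tame hereditary algebra and the matching combinatorics of the geometric picture. The paper's own proof is in fact considerably terser than your outline — after citing uniseriality it simply asserts the match ``by construction (and visual inspection)'' and notes the symmetry $\dim_\k\Ext^1(M_1,M_2)=\dim_\k\Ext^1(M_2,M_1)$ without carrying out any reduction or case analysis. Your additional structure — reducing to the case where one module sits at the mouth of the tube via Corollary~\ref{ext-tau-invariant} and $\rho$-invariance, then treating the rank-$2$, rank-$(n-2)$, and rank-$1$ tubes separately, with attention to punctured intersections and tagging parity — is exactly the kind of detail the paper leaves implicit, and it is consistent with the definitions (in particular with the convention, clarified in the remark after Definition~\ref{def-rho}, that the tagging is left alone on rank-$1$ tubes and flipped on the exceptional ones). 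So this is the same route, filled out a bit more carefully than the paper actually does.
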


\begin{proof}
    If $\lambda_1 \neq \lambda_2$, then the intersection number is zero.
    Also, the two modules $M_1$ and $M_2$ live in disjoint stable tubes, and therefore there are no nonzero extensions between $M_1$ and $M_2$, so the right-hand side of the equation is also zero.

    If $\lambda_1 = \lambda_2$, then the two modules live in the same stable tube.
    Since each indecomposable object in a given stable tube is uniquely determined by its composition series (i.e., they are uniserial) with respect to the indecomposable modules at the mouth of the tube, calculation of $\Ext^1$ is particularly easy (see \cite[Example 2.12]{simson_elements_2007-1}).
    In particular, $\dim_\k \Ext^1 (-, -)$ can be calculated completely combinatorially within a given stable tube.
    Also, by construction (and visual inspection), the intersection number between two admissible edges in the same stable tube follows exactly the same pattern as their corresponding modules.
\end{proof}

\begin{corollary}[Theorem A]
    Let $\surf$ be a triangulated, twice-punctured marked surface whose triangulation $\TT$ corresponds to an acyclic quiver $Q^\TT$ of type $\widetilde{D}_n$. 
    Then given any two admissible edges $(\gamma_1, \kappa_1, \lambda_1)$ and $(\gamma_2, \kappa_2, \lambda_2)$ (not necessarily distinct), 
    $$\Int((\gamma_1, \kappa_1, \lambda_1), (\gamma_2, \kappa_2, \lambda_2)) = \dim_\k \Ext^1 (M_1, M_2) + \dim_\k \Ext^1 (M_2, M_1)$$ where $M_i = M(\gamma_i,\kappa_i, \lambda_i)$ under the correspondence $\varphi^{-1}$ and $\Int$ is the intersection number between two admissible edges.
\end{corollary}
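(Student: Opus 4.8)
The plan is to obtain Theorem~A by combining the two lemmas of this section with the duality $D$ together with the reductions already recorded above. First I would note that both sides of the asserted identity are symmetric in the two edges: $\Int$ is symmetric by definition, and $\dim_\k\Ext^1(M_1,M_2)+\dim_\k\Ext^1(M_2,M_1)$ is manifestly symmetric. Hence, given a pair of colored admissible tagged edges, we may order them however is convenient, and by Corollary~\ref{cor-int-mod} each of $M_1,M_2$ is an indecomposable $\k Q^\TT$-module lying in the preprojective component $\mathcal{P}(\k Q^\TT)$, the preinjective component $\mathcal{Q}(\k Q^\TT)$, or one of the stable tubes of the regular component $\mathcal{R}(\k Q^\TT)$ of $\Gamma(\mod \k Q^\TT)$.

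Second, I would run a short case analysis according to which components contain $M_1$ and $M_2$. If one of the two edges is preprojective, relabel so that it is $(\gamma_1,\kappa_1,\lambda_1)$; then the identity is exactly the first lemma above, which imposes no restriction on the second edge (and in particular handles the cases (preprojective, preprojective), (preprojective, preinjective), and (preprojective, regular)). If both edges are regular, the identity is exactly the second lemma. The only configurations left are therefore (preinjective, preinjective) and (preinjective, regular).

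Third, for these remaining cases I would pass to the opposite algebra via the duality $D=\operatorname{Hom}_\k(-,\k)\colon\operatorname{rep} Q^\TT\to\operatorname{rep} (Q^\TT)^{op}$. Being an exact contravariant equivalence, $D$ gives $\Ext^1_{Q^\TT}(M,N)\cong\Ext^1_{(Q^\TT)^{op}}(DN,DM)$, so the right-hand side of the formula is unchanged up to swapping its two summands; moreover $D$ sends the preinjective component to the preprojective one and the regular component to the regular one. Via the equivalence $\varphi$ of Section~6, $D$ is realized geometrically by reflecting the twice-punctured disk so as to reverse its orientation: this carries $\TT$ to a triangulation whose associated quiver is $(Q^\TT)^{op}$, preserves all intersection numbers, and interchanges the preprojective and preinjective edges while fixing each stable tube. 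Thus (preinjective, regular) is transported to (preprojective, regular) and (preinjective, preinjective) to (preprojective, preprojective), and in both cases the first lemma applies. Assembling the three cases yields the formula for every pair of colored admissible tagged edges, which is Theorem~\ref{main-thm}.

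I expect the one point requiring genuine care to be the compatibility asserted in the third step: that $D$ really is intertwined by $\varphi$ with an intersection-preserving involution of $E^\times_\lambda$ that swaps $\mathcal{P}(\TT)$ and $\mathcal{Q}(\TT)$ while fixing each stable tube, equivariantly for $\rho$ and $\rho^{-1}$ in the expected way. An alternative that sidesteps $D$ is to dispatch the remaining cases by translation: using the $\rho$-invariance of $\Int$ together with Corollary~\ref{ext-tau-invariant}, apply $\rho^{-1}$ repeatedly until a projective edge appears among the two and then invoke the first lemma. This route only requires checking that the modules involved stay non-injective during the translation --- the analogue of the hypothesis ``$\tau^{n+1}M_2\neq 0$'' occurring in the proof of the first lemma --- and that $\Hom$, $\Ext^1$, and $\Int$ all vanish between the components $\mathcal{P}(\k Q^\TT)$, $\mathcal{Q}(\k Q^\TT)$, $\mathcal{R}(\k Q^\TT)$ in the directions forced by Theorem~\ref{thm-0-homs}, so that no cross terms are lost along the way.
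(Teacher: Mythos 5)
Your main argument (steps one through three) is correct and is, as far as one can tell, exactly what the paper intends: the corollary is stated without explicit proof immediately after the two lemmas, and earlier in the paper (just after Corollary~\ref{ext-tau-invariant}) the author records precisely the reduction you use, namely that $\mathcal{P}(\k Q)$ is dual to $\mathcal{Q}(\k Q)$ and that $\Hom$ vanishes in the directions given by Theorem~\ref{thm-0-homs}. Your symmetry observation and three-way case split are right, and the remaining cases (preinjective, preinjective) and (preinjective, regular) do indeed reduce to the first lemma by passing to the opposite algebra. The caveat you flag --- that one must check $D$ is intertwined by $\varphi$ with an orientation-reversing, intersection-preserving involution of the surface --- is a legitimate gap that the paper also leaves implicit, so you are being appropriately careful there.

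The alternative route you sketch in the final paragraph, however, does not work. You propose to dispatch the remaining cases by applying $\rho^{-1}$ (equivalently $\tau^{-1}$) repeatedly ``until a projective edge appears among the two.'' But if $M_1$ is preinjective, then $\tau^{-k}M_1$ moves toward the injectives and eventually vanishes (once $\tau^{-m}M_1=I(j)$ for some $m$, the next step $\tau^{-(m+1)}M_1$ is zero), while $\tau^{k}M_1$ remains preinjective for every $k\geq 0$ since the preprojective and preinjective components are disjoint in the affine case. No power of the Auslander--Reiten translation carries a preinjective module to a projective one, so a projective edge never appears among the two and the first lemma is never reached. The condition you record --- ``that the modules involved stay non-injective during the translation'' --- is therefore not the real obstruction; even before any module becomes injective (or dies), the translation simply has the wrong target. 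The duality argument is the correct one and cannot be replaced by translation alone; alternatively one would need to state and prove a dual version of the first lemma in which $(\gamma_1,\kappa_1,-\infty)\in\mathcal{Q}(\TT)$.
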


\begin{example}
    Let's continue our running example:
    
    \begin{center}
        \begin{tikzpicture}
    % First smaller TikZ picture
    \begin{scope}[shift={(0,0)}]

        % Arcs in the triangulation
        \draw[red,thick, postaction={decorate,decoration={markings,mark=at position 0.9 with {\node[rotate=30, text=red, scale=0.6] at (0,0) {$\bowtie$};}}}] plot [smooth, tension=1] coordinates {(2,3) (1,2) (1.3,1)};
        \node[text=red, scale=0.7] at (0.7, 1.5) {1};
        \draw[red,thick] plot [smooth, tension=1] coordinates {(2,3) (1.3,1)};
        \node[text=red, scale=0.7] at (1.3, 1.6) {2};
        \draw[red,thick] plot [smooth, tension=1] coordinates {(2,3) (1.5,0.5) (0,1)};
        \node[text=red, scale=0.7] at (1.3, 0.2) {3};
        \draw[red,thick] plot [smooth, tension=1] coordinates {(2,3) (2,-1)};
        \node[text=red, scale=0.7] at (1.8, 0) {4};
        \draw[red,thick] plot [smooth, tension=1] coordinates {(2,3) (2.5,0.5) (4,1)};
        \node[text=red, scale=0.7] at (2.7, 0.2) {5};
        \draw[red,thick] plot [smooth, tension=1] coordinates {(2,3) (2.7,1)};
        \node[text=red, scale=0.7] at (2.7,1.6) {6};
        \draw[red,thick, postaction={decorate,decoration={markings,mark=at position 0.9 with {\node[rotate=-30, text=red, scale=0.6] at (0,0) {$\bowtie$};}}}] plot [smooth, tension=1] coordinates {(2,3) (3,2) (2.7,1)};
        \node[text=red, scale=0.7] at (3.3,1.5) {7};
        
        % Black dots in the four corners
        \filldraw[black] (2,3) circle (2pt);
        \filldraw[black] (4,1) circle (2pt);
        \filldraw[black] (2,-1) circle (2pt);
        \filldraw[black] (0,1) circle (2pt);

        % Black dots along the horizontal midline (1/4 and 3/4 along the width)
        \filldraw[black] (1.3,1) circle (2pt);
        \filldraw[black] (2.7,1) circle (2pt);

        % Circle
        \draw[thick] (2,1) circle (2);
    \end{scope}
    
    % First smaller picture: Dynkin diagram of type affine D6
      \begin{scope}[shift={(6,0)}]
        \filldraw[black] (0,0) circle (2pt);
        \node[scale=0.7] at (0, 2.3) {1};
        \filldraw[black] (0,2) circle (2pt);
        \node[scale=0.7] at (0, -0.3) {2};
        \filldraw[black] (1,1) circle (2pt);
        \node[scale=0.7] at (1, 1.3) {3};
        \filldraw[black] (2,1) circle (2pt);
        \node[scale=0.7] at (2, 1.3) {4};
        \filldraw[black] (3,1) circle (2pt);
        \node[scale=0.7] at (3, 1.3) {5};
        \filldraw[black] (4,2) circle (2pt);
        \node[scale=0.7] at (4, 2.3) {6};
        \filldraw[black] (4,0) circle (2pt);
        \node[scale=0.7] at (4, -0.3) {7};
        
        \draw[thick,->, shorten >=5, shorten <=5, >=stealth'](0,2)to(1,1);
        \draw[thick,->, shorten >=5, shorten <=5, >=stealth'](0,0)to(1,1);
        \draw[thick,->, shorten >=5, shorten <=5, >=stealth'](1,1)to(2,1);
        \draw[thick,->, shorten >=5, shorten <=5, >=stealth'](2,1)to(3,1);
        \draw[thick,->, shorten >=5, shorten <=5, >=stealth'](3,1)to(4,2);
        \draw[thick,->, shorten >=5, shorten <=5, >=stealth'](3,1)to(4,0);
      \end{scope}
    \end{tikzpicture}
    \end{center}    

    Recall that our three example modules $P(3), I(3),$ and $M_0$ had the following quiver representations (here $\lambda = 0$ in the last representation):

    \begin{center}
    \begin{tikzpicture}
    \begin{scope}[shift={(0,0)}]
        \node at (0,2) {$0$};
        \node at (0,0) {$0$};
        \node at (1,1) {$\k$};
        \node at (2,1) {$\k$};
        \node at (3,1) {$\k$};
        \node at (4,2) {$\k$};
        \node at (4,0) {$\k$};
        
        \draw[thick,->, shorten >=8, shorten <=8, >=stealth'](0,2)to(1,1);
        \draw[thick,->, shorten >=8, shorten <=8, >=stealth'](0,0)to(1,1);
        \draw[thick,->, shorten >=8, shorten <=8, >=stealth'](1,1)to(2,1);
        \draw[thick,->, shorten >=8, shorten <=8, >=stealth'](2,1)to(3,1);
        \draw[thick,->, shorten >=8, shorten <=8, >=stealth'](3,1)to(4,2);
        \draw[thick,->, shorten >=8, shorten <=8, >=stealth'](3,1)to(4,0); 
    \end{scope}
    \begin{scope}[shift={(5.5,0)}]
        \node at (0,2) {$\k$};
        \node at (0,0) {$\k$};
        \node at (1,1) {$\k$};
        \node at (2,1) {$0$};
        \node at (3,1) {$0$};
        \node at (4,2) {$0$};
        \node at (4,0) {$0$};
        
        \draw[thick,->, shorten >=8, shorten <=8, >=stealth'](0,2)to(1,1);
        \draw[thick,->, shorten >=8, shorten <=8, >=stealth'](0,0)to(1,1);
        \draw[thick,->, shorten >=8, shorten <=8, >=stealth'](1,1)to(2,1);
        \draw[thick,->, shorten >=8, shorten <=8, >=stealth'](2,1)to(3,1);
        \draw[thick,->, shorten >=8, shorten <=8, >=stealth'](3,1)to(4,2);
        \draw[thick,->, shorten >=8, shorten <=8, >=stealth'](3,1)to(4,0); 
    \end{scope}
    \begin{scope}[shift={(11,0)}]
        \node at (0,2) {$\k$};
        \node at (0,0) {$\k$};
        \node at (1,1) {$\k^2$};
        \node at (2,1) {$\k^2$};
        \node at (3,1) {$\k^2$};
        \node at (4,2) {$\k$};
        \node at (4,0) {$\k$};

        \node at (0.5,2.5) {$\begin{bmatrix}
            1 \\
            1
        \end{bmatrix}$};
        \node at (0.5,-0.5) {$\begin{bmatrix}
            1 \\
            0
        \end{bmatrix}$};
        \node at (1.5,1.4) {id};
        \node at (2.5,1.4) {id};
        \node at (3,2.2) {$\begin{bmatrix}
            1 & 0
        \end{bmatrix}$};
        \node at (3,-0.2) {$\begin{bmatrix}
            1 & 0
        \end{bmatrix}$};
        
        \draw[thick,->, shorten >=8, shorten <=8, >=stealth'](0,2)to(1,1);
        \draw[thick,->, shorten >=8, shorten <=8, >=stealth'](0,0)to(1,1);
        \draw[thick,->, shorten >=8, shorten <=8, >=stealth'](1,1)to(2,1);
        \draw[thick,->, shorten >=8, shorten <=8, >=stealth'](2,1)to(3,1);
        \draw[thick,->, shorten >=8, shorten <=8, >=stealth'](3,1)to(4,2);
        \draw[thick,->, shorten >=8, shorten <=8, >=stealth'](3,1)to(4,0); 
    \end{scope}
    \end{tikzpicture}
    \end{center}

    Then, under our equivalence, we can visualize the corresponding edges: 

    \begin{center}
        \begin{tikzpicture}
            \begin{scope}[shift={(-5,0)}] % Top Left
        % Arcs in the triangulation
        \draw[red,thick, postaction={decorate,decoration={markings,mark=at position 0.5 with {\arrow{>}}}}] plot [smooth, tension=1] coordinates {(0,2) (-1.3,-0.5) (1,0.5) (2,0)};
        \node[red,scale=1] at (0, 1) {$P(3)$};
        
        % Black dots in the four corners
        \filldraw[black] (0,2) circle (2pt);
        \filldraw[black] (2,0) circle (2pt);
        \filldraw[black] (0,-2) circle (2pt);
        \filldraw[black] (-2,0) circle (2pt);

        % Black dots along the horizontal midline (1/4 and 3/4 along the width)
        \filldraw[black] (1,0) circle (2pt);
        \filldraw[black] (-1,0) circle (2pt);

        % Circle
        \draw[thick] (0,0) circle (2);
    \end{scope}
    \begin{scope}[shift={(0,0)}] % Top Middle
        % Arcs in the triangulation
        \draw[red,thick] plot [smooth, tension=1] coordinates {(0,-2) (0,1) (-2,0)};
        \node at (0.18,0.7) {
          \tikz[rotate=-25,scale=1]{
            \draw[red,->,thick] (0,0) -- (-0.1,0.1);
          }
        };
        \node[red,scale=1] at (-0.3, 0.7) {$I(3)$};
        
        % Black dots in the four corners
        \filldraw[black] (0,2) circle (2pt);
        \filldraw[black] (2,0) circle (2pt);
        \filldraw[black] (0,-2) circle (2pt);
        \filldraw[black] (-2,0) circle (2pt);

        % Black dots along the horizontal midline (1/4 and 3/4 along the width)
        \filldraw[black] (1,0) circle (2pt);
        \filldraw[black] (-1,0) circle (2pt);

        % Circle
        \draw[thick] (0,0) circle (2);
    \end{scope}
    \begin{scope}[shift={(5,0)}] % Top right
        % Arcs in the triangulation
        \draw[olive,thick, postaction={decorate,decoration={markings,mark=at position 0.75 with {\arrow{>}}},decoration={markings,mark=at position 0.95 with {\node[rotate=50, text=olive, scale=0.8] at (0,0) {$\bowtie$};}}}] plot [smooth, tension=1] coordinates {(-1,0) (0,0.6) (1.2,0) (0,-0.6) (-1,0)};
        \node[olive,scale=1] at (0, -0.9) {$M_0$};
        
        % Black dots in the four corners
        \filldraw[black] (0,2) circle (2pt);
        \filldraw[black] (2,0) circle (2pt);
        \filldraw[black] (0,-2) circle (2pt);
        \filldraw[black] (-2,0) circle (2pt);

        % Black dots along the horizontal midline (1/4 and 3/4 along the width)
        \filldraw[black] (1,0) circle (2pt);
        \filldraw[black] (-1,0) circle (2pt);

        % Circle
        \draw[thick] (0,0) circle (2);
    \end{scope}
        \end{tikzpicture}
    \end{center}

    Our work in Example~\ref{running-exa} and the pictures above confirm that \[\dim_\k \Ext^1(M_0, P(3)) = \dim_\k \Ext^1(I(3),M_0) = 2 = \Int(P(3),M_0) = \Int(I(3),M_0) \] and that $\rho P(3) \in \TT, \rho^{-1}I(3) \in \TT,$ and $\rho^2 M_0 = M_0$.
\end{example}

%% Generates the bibliography.
%\nocite{*}
\bibliography{references}

\providecommand{\bysame}{\leavevmode\hbox to3em{\hrulefill}\thinspace}
\providecommand{\MR}{\relax\ifhmode\unskip\space\fi MR }
% \MRhref is called by the amsart/book/proc definition of \MR.
\providecommand{\MRhref}[2]{%
  \href{http://www.ams.org/mathscinet-getitem?mr=#1}{#2}
}
\providecommand{\href}[2]{#2}
\begin{thebibliography}{10}

\bibitem{assem_elements_2006}
Ibrahim Assem, Andrzej Skowronski, and Daniel Simson, \emph{Elements of the {Representation} {Theory} of {Associative} {Algebras}: {Techniques} of {Representation} {Theory}}, London {Mathematical} {Society} {Student} {Texts}, vol.~1, Cambridge University Press, Cambridge, 2006.

\bibitem{auslander_representation_1995}
Maurice Auslander, Idun Reiten, and Sverre~O. Smalø, \emph{Representation {Theory} of {Artin} {Algebras}}, Cambridge {Studies} in {Advanced} {Mathematics}, Cambridge University Press, Cambridge, 1995.

\bibitem{baur_geometric_2015}
Karin Baur and Hermund~André Torkildsen, \emph{A geometric realization of tame categories}, February 2015, arXiv:1502.06489 [math].

\bibitem{buan_cluster-tilted_2007}
Aslak~Bakke Buan, Bethany Marsh, and Idun Reiten, \emph{Cluster-tilted algebras}, Transactions of the American Mathematical Society \textbf{359} (2007), no.~1, 323--332 (en).

\bibitem{buan_cluster_2008}
Aslak~Bakke Buan, Bethany~R. Marsh, and Idun Reiten, \emph{Cluster mutation via quiver representations}, Commentarii Mathematici Helvetici \textbf{83} (2008), no.~1, 143--177 (en).

\bibitem{buan_tilting_2006}
Aslak~Bakke Buan, Robert Marsh, Markus Reineke, Idun Reiten, and Gordana Todorov, \emph{Tilting theory and cluster combinatorics}, Advances in mathematics \textbf{204} (2006), no.~2, 572--618, Publisher: Elsevier.

\bibitem{caldero_quivers_2006}
Philippe Caldero, Frédéric Chapton, and Ralf Schiffler, \emph{Quivers with {Relations} {Arising} from {Clusters} ({An} {Case})}, Transactions of the American Mathematical Society \textbf{358} (2006), no.~3, 1347--1364, Publisher: American Mathematical Society.

\bibitem{draxler_existence_1996}
Peter Dräxler and José~Antonio de~la Peña, \emph{On the {Existence} of {Postprojective} {Components} in the {Auslander}-{Reiten} {Quiver} of an {Algebra}}, Tsukuba Journal of Mathematics \textbf{20} (1996), no.~2, 457--469, Publisher: Editorial Committee of Tsukuba Journal of Mathematics.

\bibitem{fomin_cluster_2008}
Sergey Fomin, Michael Shapiro, and Dylan Thurston, \emph{Cluster algebras and triangulated surfaces. {Part} {I}: {Cluster} complexes}, Acta Mathematica \textbf{201} (2008), no.~1, 83--146, Publisher: Institut Mittag-Leffler.

\bibitem{fomin_cluster_2002}
Sergey Fomin and Andrei Zelevinsky, \emph{Cluster algebras {I}: foundations}, Journal of the American Mathematical Society \textbf{15} (2002), no.~2, 497--529.

\bibitem{fomin_cluster_2003}
\bysame, \emph{Cluster algebras {II}: {Finite} type classification}, Inventiones mathematicae \textbf{154} (2003), no.~1, 63--121, arXiv:math/0208229.

\bibitem{gabriel_unzerlegbare_1972}
Peter Gabriel, \emph{Unzerlegbare {Darstellungen} {I}}, manuscripta mathematica \textbf{6} (1972), no.~1, 71--103 (de).

\bibitem{gabriel_representations_1997}
Peter Gabriel and Andrei Roiter, \emph{Representations of {Finite}-{Dimensional} {Algebras}}, Encyclopaedia of {Mathematical} {Sciences}, vol.~73, Springer, 1997 (en).

\bibitem{geis_rigid_2006}
Christof Geiß, Bernard Leclerc, and Jan Schröer, \emph{Rigid modules over preprojective algebras}, Inventiones mathematicae \textbf{165} (2006), no.~3, 589--632, Publisher: Springer.

\bibitem{geis_rigid_2007}
\bysame, \emph{Rigid modules over preprojective algebras {II}: {The} {Kac}-{Moody} case}, arXiv preprint math/0703039 (2007).

\bibitem{he_mutation_2022}
Ping He, Yu~Zhou, and Bin Zhu, \emph{Mutation graph of support tau-tilting modules over a skew-gentle algebra}, December 2022, arXiv:2212.10880 [math].

\bibitem{he_geometric_2023}
\bysame, \emph{A geometric model for the module category of a skew-gentle algebra}, April 2023, arXiv:2004.11136 [math].

\bibitem{jacquet-malo_geometric_2025}
Lucie Jacquet-Malo, \emph{A {Geometric} {Realization} of the m-cluster {Categories} of {Type} \$\$\{{\textbackslash}tilde {D}\}\_\{n\}\$\$}, Frontiers of Mathematics (2025) (en).

\bibitem{lamberti_combinatorial_2015}
Lisa Lamberti, \emph{Combinatorial model for the cluster categories of type {E}}, Journal of Algebraic Combinatorics \textbf{41} (2015), no.~4, 1023--1054 (en).

\bibitem{ringel_tame_1984}
Claus~Michael Ringel, \emph{Tame {Algebras} and {Integral} {Quadratic} {Forms}}, Lecture {Notes} in {Mathematics}, vol. 1099, Springer, Berlin, Heidelberg, 1984.

\bibitem{schiffler_geometric_2008}
Ralf Schiffler, \emph{A geometric model for cluster categories of type {Dn}}, Journal of Algebraic Combinatorics \textbf{27} (2008), no.~1, 1--21, Publisher: Springer.

\bibitem{schiffler_quiver_2014}
\bysame, \emph{Quiver {Representations}}, {CMS} {Books} in {Mathematics}, vol.~1, Springer, 2014.

\bibitem{simson_elements_2007-1}
Daniel Simson and Andrzej Skowroński, \emph{Elements of the {Representation} {Theory} of {Associative} {Algebras}: {Volume} 2: {Tubes} and {Concealed} {Algebras} of {Euclidean} type}, London {Mathematical} {Society} {Student} {Texts}, vol.~2, Cambridge University Press, Cambridge, 2007.

\bibitem{simson_elements_2007}
\bysame, \emph{Elements of the {Representation} {Theory} of {Associative} {Algebras}: {Volume} 3: {Representation}-infinite {Tilted} {Algebras}}, London {Mathematical} {Society} {Student} {Texts}, vol.~3, Cambridge University Press, Cambridge, 2007.

\bibitem{torkildsen_geometric_2015}
Hermund~André Torkildsen, \emph{A {Geometric} {Realization} of the m-cluster {Category} of {Affine} {Type} {A}}, Communications in Algebra \textbf{43} (2015), no.~6, 2541--2567, Publisher: Taylor \& Francis \_eprint: https://doi.org/10.1080/00927872.2014.903403.

\bibitem{zhou_cluster_2009}
Yu~Zhou and Bin Zhu, \emph{Cluster combinatorics of \textit{d}-cluster categories}, Journal of Algebra \textbf{321} (2009), no.~10, 2898--2915.

\end{thebibliography}
\bibliographystyle{amsplain}

%\printbibliography

\end{document}